        \setlist[enumerate]{font=\normalfont}
        \theoremstyle{plain}
        \newtheorem*{theorem*}{Theorem}
        \newtheorem{theorem}{Theorem}[section]
        \newtheorem{lemma}[theorem]{Lemma}
        \theoremstyle{definition}
        \newtheorem{definition}[theorem]{Definition}
        \newtheorem{example}[theorem]{Example}
        \newtheorem*{example*}{Example}
        \theoremstyle{remark}
        \newtheorem{remark}[theorem]{Remark}
        \newtheorem*{remark*}{Remark}
\newcommand{\PP}{\mathbb{P}}
\newcommand{\RR}{\mathbb{R}}
\definecolor{red}{rgb}{0.7,0.15,0.15}
\definecolor{green}{rgb}{0,0.5,0}
\definecolor{blue}{rgb}{0,0,0.7}
\newcommand{\footremember}[2]{%
   \footnote{#2}
    \newcounter{#1}
    \setcounter{#1}{\value{footnote}}%
}
\newcommand{\footrecall}[1]{%
    \footnotemark[\value{#1}]%
}
\author{Hern\'an {\sc Garc\'ia}\footremember{uniandes}{ Departamento de matem\'aticas, Universidad de los Andes. Carrera $1^{\rm ra}\#18A-12$, Bogot\'a, Colombia. jh.garcia1776@uniandes.edu.co.; mj.junca20@uniandes.edu.co; mvelasco@uniandes.edu.co }  \and Camilo {\sc Hern\'andez} \footnote{Industrial Engineering and Operations Research Department, Columbia University. $500$ West $120$-th street, New York, NY 10027. USA. camilo.hernandez@columbia.edu.}
\and Mauricio {\sc Junca }\footrecall{uniandes} \and Mauricio {\sc Velasco}\footrecall{uniandes} }
\date{\today}
\title{Approximate super-resolution of positive measures in all dimensions.}
\begin{document}

\maketitle

\begin{abstract}
We study the problem of reconstructing a positive discrete measure on a compact set $K\subseteq \RR^n$ from a finite set of moments (possibly known only approximately) via convex optimization. We give new uniqueness results, new quantitative estimates for approximate recovery and a new sum-of-squares based hierarchy for approximate super-resolution on compact semi-algebraic sets.

\noindent{\bf Key words:{Super-resolution, Compressed sensing, truncated moment problems }} \vspace{5mm}

\noindent{\bf AMS 2000 subject classifications: Primary 15A29 
Secondary 15B52,52A22 }

\end{abstract}

\section{Introduction}

Let $K\subseteq \RR^n$ be a compact set and let $V$ be a finite-dimensional vector space of continuous real-valued functions on $K$. If $L: V\rightarrow \RR$ is linear and $\mu$ is a finite, positive borel measure on $K$ then $\mu$ {\it represents} $L$  in V if $L(f)=\int_K fd\mu$ for all $f\in V$. In this article we study the {\it discrete reconstruction problem} which, given a representable operator $L$, asks us to find a positive discrete measure $\mu^*:=\sum_{i=1}^k c_i\delta_{x_i}$ with $c_i\geq 0$ and $x_i\in K$ which represents $L$ on $V$. 

Under very general conditions, such measures $\mu^*$ exist (see Lemma~\ref{Lem: basicTM} for details). Moreover, constructing explicit solutions $\mu^*$ is useful in a wide variety of applications, for instance:
\begin{enumerate}
\item Polynomial optimization: via the method of moments proposed by \citeauthor*{L} \cite{L} one can define an operator $L$ such that every representing measure is supported on minimizers of a given multivariate polynomial.
\item Numerical integration: any discrete representing measure $\mu^*$ gives us a cubature rule~\cite{L2} for computing integrals of functions in $V$ with respect to the measure $\mu$ via evaluation.
\item Optimal control theory: optimal control problems can be reformulated as problems on occupation measures as in~\cite{LHPT}. Any discrete measure representing optima gives us explicit optimal control policies. 
\end{enumerate}

A celebrated approach to solve the reconstruction problem goes by the name of {\it superresolution} (see ~\citeauthor*{CandesFernandezGranda} \cite{CandesFernandezGranda} \cite{FernandezGranda}) or of {\it Beurling minimal interpolation} (see~\citeauthor{DeCastroGamboa}~
\cite{DeCastroGamboa} \cite{AzaisDeCastroGamboa}) and consists of finding a minimizer $\mu^*$ of the total variation norm in the set $\mathcal{S}(K)$ of all signed Borel measures on $K$. More precisely, letting  $\|\mu\|_{\rm TV}:=\sup \int_K g d\mu$ as $g$ runs over all continuous functions $g$ on $K$ with $\|g\|_{\infty}\leq 1$ we want to solve the problem

\begin{equation}
\label{prob: TV}
\min_{\nu\in \mathcal{S}(K)} \|\nu\|_{\rm TV}\text{ : $\forall f\in V\left(\int_Kfd\nu=L(f)\right)$ }
\end{equation}

There is a wealth of foundational results about superresolution in dimension one. Motivated by applications, the objective of this article is to extend some of these basic results to the higher-dimensional polynomial setting (i.e. when $n>1$). More precisely, throughout the article we assume that our measures are positive and real-valued and that the vector space of functions $V:=V_{\leq d}$ consists of the set of polynomials of degree at most $d$ in $\RR ^n$. 

In this setting the most basic question we can ask is that of uniqueness: Given a positive discrete measure $\mu$ defining an operator $L_{\mu}(f):=\int_K f d\mu$, when can we uniquely recover $\mu$ from $L$ using superresolution?   This question leads to the following new numerical invariant of finite sets

\begin{definition}
For a finite set $X\subseteq K\subseteq \RR ^n$ define the {\it uniqueness degree} $d(X)$ as the smallest integer $d$ such that for every positive discrete measure $\mu$ supported on $X$ problem~(\ref{prob: TV})  has a unique solution when $V:=V_{\leq d}$ and $L:=L_{\mu}$.
\end{definition}
By a Theorem of De Castro and Gamboa~\cite[Theorem 2.1]{DeCastroGamboa} we know that for every $X\subseteq \RR$ of cardinality $k$ the uniqueness degree is given by $d(X)=2k$. Our first result is a generalization of this Theorem to higher-dimension. Recall that a finite set of points $X\subseteq \RR^n$ has an ideal $I(X)$ consisting of all polynomials vanishing on $X$, a generator degree $g(X)$ defined as the maximum degree of a minimal generator of $I(X)$ and an interpolation degree $i(X)$ defined as the minimum degree $d$ such that every real-valued function on $X$ is given by the restriction to $X$ of a polynomial of degree at most $d$. We have,

\begin{theorem}\label{thm: ExactUniqueness} If $X\subseteq K\subseteq \RR^n$ is a finite set then the following inequalities hold: 
\begin{enumerate}
\item $d(X)\leq \max(2g(X),i(X))$. In particular, if $X$ has cardinality $k$ and is not contained in any hyperplane in $\RR^n$ then  $d(X)\leq 2(k-n+1)$.

\item If $X\subseteq K^{\circ}$ then $d(X)\geq \ell(X)$ where $\ell(X)$ is the smallest degree of a hypersurface which is singular at all points of $X$.
\end{enumerate}
\end{theorem}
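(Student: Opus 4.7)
Both parts of this theorem are controlled by the structure of dual certificates for problem~(\ref{prob: TV}): the upper bound (1) is proved by building an explicit certificate from generators of $I(X)$, while the lower bound (2) is proved by extracting a certificate from uniqueness and observing that an interior-maximum argument forces its degree to be at least $\ell(X)$.

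For part (1), fix any positive $\mu = \sum_{i=1}^k c_i \delta_{x_i}$ with $c_i > 0$ and set $d := \max(2g(X), i(X))$. Let $f_1, \dots, f_r$ be minimal generators of $I(X)$, each of degree at most $g(X)$, and define $F := \sum_{j} f_j^2 \in V_{\leq 2g(X)}$; by construction $F$ is nonnegative on $\RR^n$ and vanishes exactly on $X$. Since $K$ is compact, $M := \max_{x \in K} F(x)$ is finite, and $M > 0$ because $X$ is finite and $K$ has points outside $X$. The polynomial $q := 1 - F/M$ then satisfies $q(x_i) = 1$ for all $i$, lies in $[0,1]$ on $K$, and is strictly below $1$ on $K \setminus X$. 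For any signed $\nu$ with the same $V_{\leq d}$-moments as $\mu$, integrating $q$ yields $\int q\, d\nu = \sum c_i = \|\mu\|_{\rm TV}$; a Hahn--Jordan decomposition $\nu = \nu_+ - \nu_-$ combined with $0 \leq q \leq 1$ gives $\|\nu\|_{\rm TV} \geq \|\mu\|_{\rm TV}$, and equality forces both $\nu_- = 0$ and $\int (1-q)\, d\nu_+ = 0$, hence $\mathrm{supp}(\nu) \subseteq X$. Since $d \geq i(X)$, interpolation polynomials $h_j \in V_{\leq d}$ with $h_j(x_i) = \delta_{ij}$ exist; testing moments against them forces $\nu(\{x_j\}) = c_j$, so $\nu = \mu$. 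The \emph{in particular} clause follows from standard Castelnuovo--Mumford regularity bounds for ideals of points: if $|X| = k$ and $X$ is not contained in a hyperplane, then $g(X), i(X) \leq k-n+1$.

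For part (2), suppose for contradiction that $d(X) \leq d < \ell(X)$. If $d < i(X)$, the evaluation functionals at $X$ are linearly dependent on $V_{\leq d}$, yielding nonzero $b \in \RR^X$ with $\sum b_i f(x_i) = 0$ for all $f \in V_{\leq d}$; then for any positive $\mu = \sum c_i \delta_{x_i}$, the perturbation $\mu + \epsilon \sum b_i \delta_{x_i}$ is a distinct positive measure with the same moments for small $|\epsilon|$, contradicting uniqueness. If $d \geq i(X)$, fix any positive $\mu$ on $X$; by a Farkas-type duality argument separating the cone of positive measures on $K \setminus X$ (combined with signed perturbations on $X$) from the kernel of the truncated moment map, uniqueness of $\mu$ is equivalent to the existence of an \emph{exposing polynomial} $p \in V_{\leq d}$ with $p \geq 0$ on $K$, $p|_X = 0$, and $p(x) > 0$ for $x \in K \setminus X$. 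Each $x_i \in K^\circ$ is an interior minimum of $p$, so $\nabla p(x_i) = 0$; thus $p$ is a nonzero polynomial of degree $\leq d$ singular at every point of $X$, contradicting $\deg p \leq d < \ell(X)$.

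The main obstacle I anticipate is the Farkas-type duality invoked in the $d \geq i(X)$ case of part (2). In one variable (De Castro--Gamboa~\cite{DeCastroGamboa}) the equivalence ``unique positive representing measure $\Leftrightarrow$ exposing polynomial exists'' is classical; in the multivariate compact semialgebraic setting it requires care with closedness of the relevant moment cones and with Slater-type regularity, but should follow either from infinite-dimensional LP duality or from a Choquet-type extreme-point argument. Every other step is either an explicit construction or a standard convexity manipulation and should pose no serious difficulty.
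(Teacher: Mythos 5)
Your proposal is correct and follows essentially the same route as the paper: for the upper bound you build the dual certificate $1-\sum_j f_j^2/M$ from generators of $I(X)$ and finish with interpolation polynomials exactly as the paper does (the paper handles the equality case via a decomposition $\Delta=\Delta_X+\Delta_X^{\perp}$ rather than Hahn--Jordan, and derives the bound $g(X)\le i(X)+1\le k-n+1$ from its Lemma~\ref{lem: points} on Hilbert functions rather than citing regularity bounds), and for the lower bound you, like the paper, pass from uniqueness to a dual certificate and run the interior-critical-point argument to get a hypersurface singular along $X$. The Farkas-type duality step you flag as the main obstacle is precisely the step the paper itself dispatches with a one-line appeal to strong duality, so your write-up is at the same level of rigor as the paper's own proof.
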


Its is easy to see that both inequalities in the Theorem agree in the one-dimensional setting implying the result of De Castro and Gamboa. The previous Theorem highlights the enormous differences between superresolution in one and in more dimensions. Whereas in one-dimension the uniqueness degree depends only on the cardinality of the set of points, in higher-dimension this is not the case and this degree is determined by the commutative algebra of the ideal of the set of points. In Section~\ref{Sec: Uniqueness} we show that Theorem~\ref{thm: ExactUniqueness} is often sharp and that there exist very different behaviors of $d(X)$ for sets of points {\it of the same cardinality} even in dimension two (see Remark~\ref{rmk: 10pts}).

In applications one is typically interested in measures whose support $X$ is not an arbitrary set of points but rather a {\it generic} set of points $X=\{p_1,\dots, p_k\}$, meaning that $(p_1,\dots, p_k)$ lie in the complement of a proper algebraic subset of $\left(\RR^n\right)^k$ (see Section~\ref{sec: generic} for details). For such sets the uniqueness degree should only depend on the cardinality and we can specialize the upper bounds from the previous Theorem obtaining

\begin{theorem}\label{thm: GenericExactUniqueness} If $X$ is a generic set of $k$ points in $\RR^n$ then the following inequalities hold:
\begin{enumerate}
\item $d(X)\leq 2(e+1)$ where $e$ is the smallest integer for which the inequality $k\leq \binom{n+e}{e}$ holds.
\item $d(X)\geq \ell$ where $\ell$ is the smallest integer for which the inequality $k\leq \frac{1}{n+1}\binom{n+\ell}{n}$ holds.
\end{enumerate}
\end{theorem}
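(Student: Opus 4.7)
The plan is to specialize both inequalities of Theorem~\ref{thm: ExactUniqueness} to the setting of generic points, so that the algebraic invariants $g(X)$, $i(X)$ and $\ell(X)$ can be replaced by explicit binomial expressions in $k$ and $n$ through elementary dimension counts.

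For part (1), I would apply Theorem~\ref{thm: ExactUniqueness}(1) and establish the two estimates $i(X)\leq e$ and $g(X)\leq e+1$ for generic $X$. The interpolation bound is immediate: the evaluation map $V_{\leq e}\to\RR^k$ has generic rank $\min(\dim V_{\leq e},k)=k$ by the definition of $e$, so for $X$ in a Zariski-open subset of $(\RR^n)^k$ this map is surjective and $i(X)\leq e$. For the generator bound I would argue that genericity of $X$ forces the Hilbert function of the homogenization of $I(X)$ to attain its maximal value $H_X(d)=\min\bigl(\binom{n+d}{n},k\bigr)$ at every degree, so it reaches its stable value $k$ exactly at degree $e$. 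Standard regularity theory for $0$-dimensional projective schemes then gives $\operatorname{reg}(I(X))=\operatorname{reg}(S/I(X))+1\leq e+1$, which in turn bounds the degrees of the minimal generators of $I(X)$ by $e+1$. Combining yields $d(X)\leq\max(2(e+1),e)=2(e+1)$.

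For part (2), I would use Theorem~\ref{thm: ExactUniqueness}(2) and prove $\ell(X)\geq \ell$ by a direct parameter count. A polynomial $f\in V_{\leq d}$ is singular at a point $p$ iff the $n+1$ linear functionals $f\mapsto f(p)$ and $f\mapsto \partial_i f(p)$ for $i=1,\dots,n$ all annihilate $f$. For generic $X$ the resulting $k(n+1)$ linear conditions on the $\binom{n+d}{n}$-dimensional space $V_{\leq d}$ are linearly independent, so a nonzero hypersurface of degree $\leq d$ singular at every point of $X$ exists iff $\binom{n+d}{n}>k(n+1)$. For $d<\ell$ the minimality of $\ell$ gives $k>\frac{1}{n+1}\binom{n+d}{n}$, so no such hypersurface exists and $\ell(X)\geq \ell$ as desired.

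The main obstacle is the generator-degree estimate $g(X)\leq e+1$. The other two bounds are routine general-position arguments inside the space $V_{\leq d}$, but this one relies on more delicate commutative algebra: one must pass from the affine ideal $I(X)\subseteq\RR[x_1,\dots,x_n]$ to its homogenization in $\RR[x_0,\dots,x_n]$, invoke the identity $\operatorname{reg}(I)=\operatorname{reg}(S/I)+1$ together with the characterization of regularity of $0$-dimensional schemes as the smallest degree in which the Hilbert function stabilizes, and then translate the graded generator-degree bound back to a bound on the maximum degree of a minimal affine generator.
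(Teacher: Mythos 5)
Your argument is correct and follows essentially the same route as the paper: part (1) combines the maximal Hilbert function of generic points (giving $i(X)=e$) with the bound $g(X)\leq i(X)+1$, which the paper obtains from its self-contained Lemma~\ref{lem: points}(3) (the Castelnuovo--Mumford regularity bound you invoke from standard theory), and then applies Theorem~\ref{thm: ExactUniqueness}(1); part (2) is the paper's identical parameter count, asserting that the $k(n+1)$ double-point conditions are independent for generic $X$ and applying Theorem~\ref{thm: ExactUniqueness}(2). The only difference is cosmetic --- you cite general regularity theory for $0$-dimensional schemes where the paper uses its elementary Artinian-reduction lemma --- so no new ideas or gaps relative to the paper's own proof.
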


There are several approaches for solving the optimization Problem~(\ref{prob: TV}): this can be done either via discretization as in~\cite{Donoho} (although it is known that this approach works poorly for closed spaced points~\cite{FW}), via a semidefinite formulation of the dual problem as in~\cite{FernandezGranda,TBR} or via sum-of-squares hierarchies as De Castro, Gamboa, Henrion and Lasserre propose in~\cite{DeCastroGamboaHenrionLasserre}. 

Since we are working in the context of reconstructing positive measures (and not signed measures) one can also use a simple sum-of-squares relaxation which we prove is guaranteed to work for degrees above the upper bound of Theorem~\ref{thm: ExactUniqueness} (see Section~\ref{Sec: moments} for details). This result highlights a second fundamental difference between the one-dimensional and higher-dimensional setting. Whereas nonnegative univariate polynomials coincide with sums-of-squares this correspondence is no longer true in general in higher-dimensions. This phenomenon is well understood geometrically~\cite{BSVJams} but leads to additional algorithmic difficulties when $n\geq 2$. Nevertheless,  our numerical examples (see Section~\ref{Sec: numerics}) show that the simple moments relaxation works well in practice when we have exact knowledge of the moments of the unknown measure. 

In many applications of the measure reconstruction problem, however, the moments of the measure we wish to reconstruct are known only approximately. More precisely, we fix a basis $\phi_1,\dots, \phi_m$ for $V$ and would like to recover a point measure $\mu$ from a known vector $y$ with components given by $y_i:=\int_K\phi_i d\mu +\epsilon_i$ where $\epsilon:=(\epsilon_1,\dots, \epsilon_m)$ is a noise term bounded by a known value $\delta$ i.e. $\|\epsilon\|_2\leq \delta$ . A very significant contribution in this setting is the work of Azais, De Castro and Gamboa~\cite{AzaisDeCastroGamboa} who give quantitative estimates for the error when the recovery mechanism is to solve the following {\it Beurling Lasso} (BLASSO) optimization problem: 

\begin{equation}
\label{prob: BLASSO}
\min_{\nu\in \mathcal{S}(K)} \|\nu\|_{\rm TV} \text{ : } \left\|\left(\int_K\phi_i d\nu-y_i\right)_{i=0,\dots,m}\right\|_2\leq \delta
\end{equation}

Our next result gives quantitative localization bounds for problem~(\ref{prob: BLASSO}) in all dimensions. Its proof is a combination of the ideas of Azais, De Castro and Gamboa together with the explicit construction of $L^2$-optimal approximations to Dirac delta functions and some basic commutative algebra (see Section~\ref{Sec: QuantitativeApprox}). In order to describe the result we introduce the following notation: If $\Delta$ is a discrete measure and $z\in K$ we will write $\Delta(z)$ to mean the coefficient of $\delta_z$ in the unique decomposition of $\Delta$ as a sum of Dirac measures. We will write $d(X,z)$ for the euclidean distance between a point $z$ and a set $X$ and write $N(X,\delta)$ (resp. $F(X,\delta)$) for the set of points which are at distance at most (resp. at least) $\delta$ from $X$. We fix a basis $\phi_1,\dots, \phi_T$ of $V_{\leq d}$ which we assume to be orthonormal with respect to some probability measure on $K$.

\begin{theorem}\label{thm: ApproxMain} Let $\mu$ be any positive discrete measure supported on a finite set $X\subseteq K\subseteq \RR^n$ and let $\hat{\Delta}$ be a discrete minimizer of~(\ref{prob: BLASSO}) with $y_i:=\int\phi_id\mu +\epsilon_i$ and $\|\epsilon\|_2\leq \delta$. If $I(X)$ admits a set of $s$ generators of degree $\leq g$ and $d\geq 2(m-1)g$ then there exist a positive constant $D$ such that the following statements hold for all sufficiently large even integers $m$:
\begin{enumerate}
\item If $z\in K$ is such that $\hat{\Delta}(z)> 4s\delta$ then $d(X,z)\leq c_0$ where $c_0= \frac{1}{m}\sqrt\frac{3!}{D}$.
\item The following inequalities hold:
\[
\sum_{z\in N\left(X,c_0\right), \hat\Delta(z)>0} \hat{\Delta}(z) d(X,z)^2\leq \left(\frac{3!4s}{m^2D}\right)\delta\]
\[\sum_{z\in F\left(X,c_0\right), \hat\Delta(z)>0} \hat{\Delta}(z) \leq 2s\delta\]
\[\sum_{z: \hat\Delta(z)<0}|\hat{\Delta}(z)|\leq 2\delta\]
\item If $x^*\in X$ then the following inequality holds: 
\[\left| \mu(x^*)-\sum_{z: d(z,x^*)\leq c_0}\hat\Delta(z)\right|\leq \frac{1}{m}\|\mu\|_{\rm TV} + \left(2(s+1)+ \frac{3!2s\pi^2}{{\rm diam}(K)^2D}\right)\delta.\]
\end{enumerate}
\end{theorem}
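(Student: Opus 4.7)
The argument adapts the dual-certificate framework of Azais--De Castro--Gamboa to the multivariate polynomial setting. The starting point is the \emph{master inequality}: feasibility of $\mu$ for the BLASSO gives $\|\hat\Delta\|_{\mathrm{TV}}\leq\|\mu\|_{\mathrm{TV}}$, and Cauchy--Schwarz on the $\ell^2$ moment constraint, together with $\|\epsilon\|_2\leq\delta$, yields
\[
\Bigl|\textstyle\int f\,d(\hat\Delta-\mu)\Bigr|\;\leq\;2\delta\,\|f\|_{L^2(\omega)}\qquad\text{for every }f\in V_{\leq d}.
\]
Applied to $f\equiv 1$, together with $\|\hat\Delta^+\|_{\mathrm{TV}}+\|\hat\Delta^-\|_{\mathrm{TV}}\leq\|\mu\|_{\mathrm{TV}}$, this immediately produces the negative-mass bound $\|\hat\Delta^-\|_{\mathrm{TV}}\leq 2\delta$, i.e.\ the third inequality in (2). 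All other assertions come from testing the master inequality against carefully designed nonnegative polynomials in $V_{\leq d}$.

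The central construction is: for each $z\in K$, let $\rho_z\in V_{\leq (m-2)g}$ be the normalized Christoffel--Darboux kernel at $z$, i.e.\ the $L^2(\omega)$-optimal approximation to $\delta_z$ satisfying $\rho_z(z)=1$, and set
\[
P_z(y)\;:=\;\rho_z(y)^2\sum_{i=1}^s h_i(y)^2\;\in\;V_{\leq 2(m-1)g}\subseteq V_{\leq d},
\]
where $h_1,\dots,h_s$ are the prescribed generators of $I(X)$ of degree $\leq g$. Then $P_z$ is a nonnegative polynomial vanishing identically on $X$, with $P_z(z)=\sum_i h_i(z)^2$ comparable to $d(X,z)^2$ near $X$. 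The key analytic input is a Hessian-type estimate $\rho_z(y)^2\geq 1-Dm^2\|y-z\|^2$ for some universal $D>0$, valid for all sufficiently large even $m$; this pins down the resolution scale $c_0=\frac{1}{m}\sqrt{3!/D}$. Together with a bound $\|P_z\|_{L^2(\omega)}\leq Cs$ (after suitably normalizing the $h_i$), the master inequality yields $\int P_z\,d\hat\Delta \leq 2s\delta$.

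Expanding $\int P_z\,d\hat\Delta=\sum_w\hat\Delta(w)P_z(w)$, isolating the $w=z$ term, and controlling the remaining negative contributions via $\|\hat\Delta^-\|_{\mathrm{TV}}\leq 2\delta$ gives conclusion (1): whenever $d(X,z)>c_0$ one obtains $\hat\Delta(z)\leq 4s\delta$. Summing the estimate --- weighted by $d(X,z)^2$ for the near part and uniformly for the far part --- yields the remaining two inequalities in (2). For (3) one uses a refined certificate $P_{x^*}\in V_{\leq d}$ that equals $1$ at $x^*\in X$ and vanishes on $X\setminus\{x^*\}$, obtained by modifying $\rho_{x^*}^2$ with a Lagrange-type correction built from the $h_i$; applying the master inequality to $P_{x^*}$ and splitting the resulting sum into the $c_0$-neighbourhood of $x^*$ and its complement gives the stated bound, with the $(1/m)\|\mu\|_{\mathrm{TV}}$ term arising from the Hessian estimate and the $\pi^2/(\diam(K)^2 D)$ constant coming from a diameter-versus-$c_0$ trade-off. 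The main technical obstacle is verifying the Christoffel-kernel bounds uniformly in $z\in K$, specifically the Hessian lower bound together with $L^2$ and $L^\infty$ control on $\rho_z$ with constants independent of $z$; this is exactly where the hypotheses ``$m$ sufficiently large'' and that the $\phi_i$ be orthonormal with respect to a probability measure on $K$ enter, via the classical theory of Christoffel functions and Markov--Bernstein-type inequalities.
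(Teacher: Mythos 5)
Your master inequality and the negative-mass bound (testing with $f\equiv 1$ and using minimality of $\hat\Delta$) are fine, and the general dual-certificate flavour matches the paper. But the central construction does not deliver the stated conclusions, for two concrete reasons. First, the aggregate bounds of part (2) cannot be obtained by ``summing'' per-atom estimates drawn from the family $P_z$: each application of the master inequality costs $O(s\delta)$ on the right-hand side, so summing over the atoms of $\hat\Delta$ (whose number is not controlled) produces a bound proportional to the number of atoms, not the uniform $O(s\delta)$ claimed. The paper avoids this by using a \emph{single} sup-norm-normalized certificate $P_m=\frac1s\sum_{i=1}^s H_m(f_i/M_i)$, where $H_m$ is an explicit average of Chebyshev polynomials (Lemma~\ref{lem: sombrero}), with $\|P_m\|_\infty\le 1$, $P_m\equiv 1$ exactly on $X$, and $1-P_m(z)\ge\min\{C_a\,d(z,X)^2,\,C_b\}$ with $C_a=\frac{m^2D}{3!\,2s}$ and $C_b=\frac1{4s}$; combined with the minimizer inequality $0\le\|\hat\Delta\|_{\rm TV}-\int_K P_m\,d\hat\Delta\le 2\delta$ (Lemma~\ref{lem: key}(2), which uses optimality, not just feasibility), this yields all of (1)--(2) in one stroke via Theorem~\ref{thm: Approx}.

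Second, even the pointwise statement (1) fails with your $P_z=\rho_z^2\sum_i h_i^2$: since $\rho_z(z)=1$, you have $P_z(z)=\sum_i h_i(z)^2\approx D\,d(X,z)^2$, which for $z$ just outside the $c_0$-neighbourhood is of size $3!/m^2$; dividing through, your estimate gives $\hat\Delta(z)=O(m^2\delta)$, not $4s\delta$. What is needed is a certificate whose gap $1-P$ is bounded below by a constant of order $1/s$ \emph{uniformly on the whole far region}, which is exactly what the ``sombrero'' property of $H_m$ (it drops below $3/4$ as soon as some $|f_j/M_j|$ exceeds roughly $2/m$) provides, and what a Christoffel-type localization at $z$ does not. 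Relatedly, the uniform kernel bounds you invoke ($\rho_z(y)^2\ge 1-Dm^2\|y-z\|^2$ with a universal $D$, plus $L^2$ and $L^\infty$ control uniform in $z$) are asserted rather than proved and are delicate on a general compact $K$ with a general orthonormalizing measure; moreover the theorem's constant $D$ has a different meaning --- in the paper it comes from the lower bound $\sum_i(f_i/M_i)^2\ge D\,d(\cdot,X)^2$ near $X$ (Lemma~\ref{lem: genpoints}), i.e.\ from the geometry of $X$, not from Christoffel asymptotics. Your sketch of (3) inherits these problems (it leans on (1)--(2) and on the unproved kernel bounds); the paper instead uses the explicit univariate certificate $G_m(z)=\frac1n\sum_{j=1}^n H_m\bigl((z_j-x^*_j)/\diam(K)\bigr)$, Lemma~\ref{lem: key}(1), and the already-established part (2), with the $\frac1m\|\mu\|_{\rm TV}$ term coming from $|G_m|\le\frac1m$ at the points of $X\setminus\{x^*\}$ rather than from any Hessian estimate.
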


In words, the previous Theorem says that the recovered measure $\hat\Delta$ has no large spikes far from those of $\mu$ (parts $(1)$, $(2)$) and furthermore that it has spikes near every support point of $\mu$ whose coefficients approximate those of $\mu$ rather well (part $(3)$) when $d$ is sufficiently large. In particular, it gives us explicit dependencies on the quality of our approximation as a function of the degree $d$ and the error size $\delta$.

The explicit determination of the constants appearing in the previous Theorem is, in general, a challenging problem which depends on the geometry of the support set $X$. In Example~\ref{Exo: Grids} we give an estimate for these quantities when the measures are supported on any grid in $\RR^n$. As is the case in one-dimensional super-resolution the key determinants of these constants end up being suitable measures of the distance between support points. 

Finally, in order to apply Theorem~\ref{thm: ApproxMain} we must be able to solve the (infinite-dimensional) optimization problem~(\ref{prob: BLASSO}). Our next Theorem recasts~(\ref{prob: BLASSO}) as a finite-dimensional convex optimization problem extending the main results of De Castro, Gamboa, Henrion and Lasserre in~\cite{DeCastroGamboaHenrionLasserre} to the approximate recovery problem.

\begin{theorem}\label{thm: dual} The optimal value of~(\ref{prob: BLASSO}) coincides with the optimal value of the following finite-dimensional convex optimization problem
\begin{equation}\label{prob: BLASSO2}
\sup_{(\vec{a},b)\in \RR^n\times \RR} \left\{\langle \vec{a}, y \rangle-b\delta : P=\sum_{i=1}^m a_i\phi_i,\|P\|_{\infty}\leq 1, \|\vec{a}\|_2\leq b\right\}
\end{equation}
\end{theorem}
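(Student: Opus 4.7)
The plan is to derive~(\ref{prob: BLASSO2}) as the Lagrangian dual of~(\ref{prob: BLASSO}) and then invoke strong duality. I view $\mathcal{S}(K)$ as the dual of $C(K)$ via the Riesz representation theorem, so that $\|\nu\|_{\rm TV}$ is precisely the dual norm of $\|\cdot\|_\infty$. Write $A\co\mathcal{S}(K)\to \RR^m$ for the moment operator $(A\nu)_i:=\int_K\phi_i\,d\nu$ and introduce a slack variable $u\in\RR^m$ with $\|u\|_2\le\delta$ together with the equality $A\nu=y+u$. With multiplier $\vec a\in\RR^m$ the Lagrangian
\[
L(\nu,u,\vec a)\;=\;\|\nu\|_{\rm TV}+\langle \vec a,\,A\nu-u-y\rangle
\]
is to be minimized in $(\nu,u)$ and maximized in $\vec a$.

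Next I would compute the two inner infima. Setting $P:=\sum_{i=1}^m a_i\phi_i\in C(K)$ one has $\langle \vec a,A\nu\rangle=\int_K P\,d\nu$, and from $\|\nu\|_{\rm TV}=\sup_{\|g\|_\infty\le 1}\int g\,d\nu$ it follows that
\[
\inf_{\nu\in\mathcal{S}(K)}\Bigl(\|\nu\|_{\rm TV}+\int_K P\,d\nu\Bigr)=\begin{cases}0& \text{if }\|P\|_\infty\le 1,\\ -\infty& \text{otherwise,}\end{cases}
\]
where the nontrivial direction uses the fact that if $|P(x_0)|>1$ then the point masses $\pm t\delta_{x_0}$ drive the expression to $-\infty$ as $t\to\infty$. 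Similarly $\inf_{\|u\|_2\le\delta}-\langle \vec a,u\rangle=-\delta\|\vec a\|_2$. Substituting into the Lagrangian and replacing $\vec a\mapsto -\vec a$ (which preserves $\|P\|_\infty\le 1$) yields
\[
\sup_{\vec a\,:\,\|P\|_\infty\le 1}\bigl(\langle\vec a,y\rangle-\delta\|\vec a\|_2\bigr),
\]
which agrees with~(\ref{prob: BLASSO2}) upon introducing the epigraph variable $b\ge\|\vec a\|_2$, since the supremum in $b$ is attained at $b=\|\vec a\|_2$.

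The main obstacle is justifying the min-max exchange in this infinite-dimensional setting. For this I would invoke Slater's constraint qualification: the primal is a convex program with a single finite-dimensional inequality constraint, and it is strictly feasible, since if $\|\epsilon\|_2<\delta$ the measure $\nu=\mu$ is strictly feasible, and in boundary cases one perturbs $\mu$ slightly inside the $A$-preimage of the $\delta$-ball around $y$. Fenchel--Rockafellar duality then yields zero duality gap. A more self-contained alternative is to observe that $\|\cdot\|_{\rm TV}$ is weak-$*$ lower semicontinuous, that the sublevel sets $\{\nu:\|\nu\|_{\rm TV}\le R\}$ are weak-$*$ compact by the Banach--Alaoglu theorem, and that $A$ is weak-$*$ continuous; this gives existence of a primal optimum and lets one apply a Sion-type minimax theorem to swap the order of the extrema. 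Either way, once the swap is justified the computation above identifies the dual with~(\ref{prob: BLASSO2}).
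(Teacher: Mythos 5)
Your argument is correct in substance, but it takes a genuinely different route from the paper. The paper rewrites~(\ref{prob: BLASSO}) as a conic program in standard form on $E=C(K)^*\times C(K)^*\times\RR^m\times\RR$ (splitting $\nu=\nu_+-\nu_-$ via the Hahn decomposition), reads off the conic dual, and closes the duality gap by a closedness argument: boundedness of the image coordinates controls the total variations, Banach--Alaoglu gives weak-$*$ compactness, hence the image of the cone under the augmented map is closed and Barvinok's Theorem 7.1 applies. You instead form the Lagrangian directly, let the constraint $\|P\|_\infty\le 1$ emerge from the inner infimum over $\nu$ (your $0$/$-\infty$ computation is exactly right, as is $\inf_{\|u\|_2\le\delta}-\langle\vec a,u\rangle=-\delta\|\vec a\|_2$ and the epigraph reformulation in $b$), and close the gap via a Slater/Fenchel--Rockafellar argument, exploiting that only finitely many linear functionals of $\nu$ appear in the constraints. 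Your route is shorter and avoids both the Hahn-decomposition bookkeeping and the closedness argument; the paper's route needs no constraint qualification and treats $\delta\ge 0$ uniformly.

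Two points in your write-up deserve to be made precise. First, the ``boundary case'' $\|\epsilon\|_2=\delta$: perturbing $\mu$ into the open ball is legitimate precisely because the moment map $A\co\mathcal{S}(K)\to\RR^m$ is surjective (the $\phi_i$ are linearly independent in $C(K)$, so evaluations at points of $K$ span $V^*$ and a discrete signed measure $\nu_0$ with $A\nu_0=y$ exists); you should say this, since without surjectivity the open ball could miss the range of $A$ and Slater would fail. Equivalently, and more robustly (it also covers $\delta=0$), the value function $v(p)=\inf\{\|\nu\|_{\rm TV}\co A\nu-y\in p+\bar B(0,\delta)\}$ is convex and finite on all of $\RR^m$ by surjectivity, hence continuous at $0$, which gives zero duality gap; this is the clean way to justify the exchange despite the infinite-dimensional decision variable, since the perturbation space is finite-dimensional. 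Second, your alternative via Sion's theorem is not a complete substitute as stated: neither $\vec a$-space nor the feasible measures form a compact set, and if you restore compactness by restricting $\nu$ to a total-variation ball, the inner infimum is no longer $0$/$-\infty$, so the resulting dual differs from~(\ref{prob: BLASSO2}) and an additional limiting argument would be needed. Rely on the Slater/value-function route as your actual proof.
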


Next we propose a hierarchy of semidefinite programs for solving~(\ref{prob: BLASSO2}) when $K$ is semialgebraic and explicitly bounded and $V\subseteq \RR[\vec{x}]:=\RR[x_1,\dots, x_n]$. To describe the hierarchy we will need the following basic definition. For $g_1,\dots, g_t\in \RR[\vec{x}]$ and $e\in \mathbb{Z}_+$ recall that the quadratic module of degree $e$ of $g_1,\dots, g_t$ is given by
\[Q_e(g_1,\dots,g_t)=\left\{f\in \RR[\vec{x}]: \exists (s_i)_{i=0,1,\dots, t} \text{  such that $f=s_0+\sum_{i=1}^tg_is_i$}\right\}.\]
where the $s_i\in \RR[\vec{x}]$ are sums-of-squares of polynomials of degree bounded by $e$. Henceforth we let $\vec{\phi}=(\phi_1,\dots, \phi_T)$ be the vector whose components are our chosen basis for $V$.

\begin{theorem}\label{thm: hierarchy} Suppose $K=\{x\in \RR^n: g_1(x)\geq 0,\dots, g_t(x)\geq 0\}$ for some $g_i\in \RR[x_1,\dots, x_n]$ and assume there exist  positive integers $N,e$ such that $N-\|x\|_2^2\in Q_{e}(g_1,\dots, g_t)$.  If $\alpha_s$ denotes the number 
\[\alpha_s:=\sup_{(\vec{a},b)\in \RR^n\times \RR}\left\{\langle\vec{a},y\rangle -b\delta: 1-\langle \vec{a},\vec{\phi}\rangle, 1+\langle \vec{a},\vec{\phi}\rangle \in Q_s(g), \|\vec{a}\|_2\leq b \right\}.\]
then the following statements hold:
\begin{enumerate}
\item For each $s$ the number $\alpha_s$ is the optimal value of a semidefinite programming problem.
\item The equality $\lim_{s\rightarrow\infty}\alpha_s = \alpha$ holds where $\alpha$ is the optimal value of problem~(\ref{prob: BLASSO2}). 
\end{enumerate}
\end{theorem}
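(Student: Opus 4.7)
The plan is to prove (1) by the standard SOS $\leftrightarrow$ positive semidefinite Gram-matrix correspondence, and to prove (2) by combining monotonicity with Putinar's Positivstellensatz after a strict-positivity perturbation of any near-optimal dual pair.

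For (1), I would fix $s$ and note that membership in $Q_s(g_1,\dots,g_t)$ is a finite-dimensional spectrahedral condition: an SOS polynomial of degree at most $2s$ is precisely one of the form $v^T M v$ for some PSD matrix $M$, where $v$ is the vector of monomials of degree at most $s$. Thus each constraint $1\pm\langle\vec a,\vec\phi\rangle\in Q_s(g)$ unfolds into linear equalities between the entries of $\vec a$ and the entries of $t+1$ PSD Gram matrices. The Euclidean-ball constraint $\|\vec a\|_2\le b$ is equivalent to the PSD condition on the arrow matrix $\bigl(\begin{smallmatrix} bI_n & \vec a\\ \vec a^T & b\end{smallmatrix}\bigr)$ (via Schur complement), and the objective $\langle\vec a,y\rangle - b\delta$ is linear in $(\vec a,b)$; hence $\alpha_s$ is the value of a semidefinite program.

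For (2), I would first obtain monotonicity $\alpha_s\le\alpha_{s+1}$ from the inclusion $Q_s(g)\subseteq Q_{s+1}(g)$, and the upper bound $\alpha_s\le\alpha$ from the fact that every element of $Q_s(g)$ is pointwise nonnegative on $K$: any feasible $(\vec a,b)$ for $\alpha_s$ therefore satisfies $\|P\|_\infty\le 1$ on $K$ and $\|\vec a\|_2\le b$, so it is feasible for~(\ref{prob: BLASSO2}). For the converse, I would fix $\epsilon>0$ and pick $(\vec a^*,b^*)$ feasible for~(\ref{prob: BLASSO2}) with objective at least $\alpha-\epsilon$, then for $\eta\in(0,1)$ set $\tilde{\vec a}:=(1-\eta)\vec a^*$, $\tilde b:=(1-\eta)b^*$ and $\tilde P:=\sum_i\tilde a_i\phi_i$. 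Then $\|\tilde{\vec a}\|_2\le\tilde b$ and $1\pm\tilde P\ge\eta>0$ on $K$. The hypothesis $N-\|x\|_2^2\in Q_e(g)$ is one of the standard equivalent formulations of the quadratic module $Q(g):=\bigcup_s Q_s(g)$ being Archimedean, so Putinar's Positivstellensatz applies and gives $1\pm\tilde P\in Q(g)$; hence both lie in $Q_{s^*}(g)$ for some finite $s^*=s^*(\eta,\epsilon)$. Thus $(\tilde{\vec a},\tilde b)$ is feasible at level $s^*$, yielding $\alpha_{s^*}\ge(1-\eta)(\alpha-\epsilon)$, and combining with monotonicity and letting first $\eta\to 0$ and then $\epsilon\to 0$ gives $\lim_s\alpha_s=\alpha$.

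The main (essentially the only) obstacle is that Putinar's theorem needs strict positivity on $K$, whereas an optimal pair for~(\ref{prob: BLASSO2}) may satisfy $|P^*|\le 1$ only non-strictly. The $(1-\eta)$-contraction is what cleanly upgrades nonnegativity to strict positivity at an objective cost that vanishes as $\eta\to 0$; crucially, contracting $b$ by the same factor keeps $\|\tilde{\vec a}\|_2\le\tilde b$ and scales the objective by $(1-\eta)$ as well, which is all that is needed. Everything else is routine bookkeeping around the Gram-matrix representation of SOS cones and the SDP representation of the Euclidean-ball constraint.
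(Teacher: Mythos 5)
Your proposal is correct and follows essentially the same route as the paper: part (1) via the Gram-matrix/spectrahedral representation of the SOS constraints together with the SDP-representable second-order-cone condition $\|\vec{a}\|_2\leq b$, and part (2) via a $(1-\eta)$-contraction of a (near-)optimal dual pair to obtain strict positivity of $1\pm \tilde P$ on $K$, followed by Putinar's Positivstellensatz under the Archimedean hypothesis $N-\|x\|_2^2\in Q_e(g)$. Your version is marginally more careful than the paper's in two small ways — you work with an $\epsilon$-near-optimal feasible pair rather than assuming the supremum in~(\ref{prob: BLASSO2}) is attained, and you spell out the bounds $\alpha_s\leq\alpha_{s+1}\leq\alpha$ that the paper uses implicitly — but these are refinements of the same argument, not a different approach.
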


In Section~\ref{Sec: numerics} we use Theorem~\ref{thm: hierarchy} for carrying out BLASSO minimization to recover discrete measures and show that we obtain good approximations in dimensions one and two. Our Julia implementation is also made publically available for the community (see Section~\ref{Sec: numerics}).

To conclude this introduction we propose a new application of super-resolution for finding good approximate discretizations of general probability measures on $K$ in the following sense: 

\begin{definition} A $(\delta,k)$-summary of a (not necessarily discrete) positive measure $\mu$ on $K$ with respect to $\phi_1,\dots, \phi_T$ is a positive measure $\Delta$ with at most $k$-atoms for which the following inequality holds
\[\left\|\left(\int_K\phi_i d\mu-\int_K\phi_i d\Delta\right)_{i=1,\dots,T}\right\|_2\leq \delta\] 
\end{definition}
We will assume we know the exact values of the moments of a measure $\mu$ on $K$ and that we would like to find a $(\delta,k)$ summary (for given $\delta$ and $k$). The following Theorem shows that if such a summary exists then it is possible to use super-resolution to approximate it. 

\begin{theorem}\label{thm: summary} Suppose there exists a $(\delta, k)$ summary of $\mu$ supported on a set $X$ and let $\hat{\Delta}$ be a discrete minimizer of the problem

\begin{equation}
\label{prob: Summary}
\min_{\nu\in \mathcal{S}(K)} \|\nu\|_{\rm TV} \text{ : } \left\|\left(\int_K\phi_i d\nu-\int_K\phi_i d\mu\right)_{i=0,\dots,T}\right\|_2\leq \delta.
\end{equation}
If $d$ is sufficiently large then the conclusions of Theorem~\ref{thm: ApproxMain} hold for $\hat\Delta$.
\end{theorem}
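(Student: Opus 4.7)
The plan is to reduce Theorem~\ref{thm: summary} directly to Theorem~\ref{thm: ApproxMain} by reinterpreting the moments of $\mu$ as noisy moments of the hypothesized $(\delta,k)$-summary. Let $\Delta^{*}$ denote a $(\delta,k)$-summary of $\mu$ supported on the finite set $X\subseteq K$, so that $\Delta^{*}$ is a positive discrete measure with at most $k$ atoms satisfying
\[
\left\|\left(\int_K\phi_i\,d\mu-\int_K\phi_i\,d\Delta^{*}\right)_{i=1,\dots,T}\right\|_2\leq \delta.
\]
Define $y_i:=\int_K\phi_i\,d\mu$ and $\epsilon_i:=y_i-\int_K\phi_i\,d\Delta^{*}$, so that $y_i=\int_K\phi_i\,d\Delta^{*}+\epsilon_i$ and $\|\epsilon\|_2\leq \delta$. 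Under this relabelling, the feasibility constraint of problem~(\ref{prob: Summary}) becomes exactly the BLASSO constraint of problem~(\ref{prob: BLASSO}) associated with the discrete measure $\Delta^{*}$ and noise vector $\epsilon$. Consequently, any discrete minimizer $\hat{\Delta}$ of~(\ref{prob: Summary}) is also a discrete minimizer of the BLASSO problem~(\ref{prob: BLASSO}) attached to $\Delta^{*}$.

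The second step is to check the hypotheses of Theorem~\ref{thm: ApproxMain} for $\Delta^{*}$. Since $\Delta^{*}$ is positive, discrete and supported on the finite set $X$, its vanishing ideal $I(X)$ admits a finite generating set of some maximum degree $g$ and size $s$ (for instance one can take $g\leq i(X)+1$ or appeal to the generator degree $g(X)$ discussed before Theorem~\ref{thm: ExactUniqueness}). For any sufficiently large even integer $m$ the inequality $d\geq 2(m-1)g$ is satisfied whenever $d$ is sufficiently large, which is precisely the hypothesis on $d$ in the statement. All the remaining hypotheses of Theorem~\ref{thm: ApproxMain} are structural conditions on $X$ and $K$ that are already available.

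Applying Theorem~\ref{thm: ApproxMain} to $\Delta^{*}$ with noise vector $\epsilon$ then yields, for the very same minimizer $\hat{\Delta}$, all three conclusions of that theorem: localization of large spikes of $\hat{\Delta}$ near $X$, the three quantitative summability bounds, and the per-atom coefficient approximation $|\Delta^{*}(x^{*})-\sum_{z:\,d(z,x^{*})\leq c_0}\hat{\Delta}(z)|$ controlled by $\|\Delta^{*}\|_{\rm TV}/m$ plus a multiple of $\delta$. This is exactly what is meant by ``the conclusions of Theorem~\ref{thm: ApproxMain} hold for $\hat{\Delta}$'', now understood with respect to the support $X$ of the summary.

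The only genuine subtlety, and the point deserving most care in the write-up, is that Theorem~\ref{thm: ApproxMain} is formally stated with $\mu$ being the underlying discrete measure; here we are applying it with $\Delta^{*}$ playing that role while $\mu$ itself may be non-discrete. No step of the proof of Theorem~\ref{thm: ApproxMain} uses the identity between the object called $\mu$ in its statement and the measure from which the observed moments literally come; it only uses that the observed vector $y$ satisfies $y=(\int\phi_i\,d\Delta^{*})+\epsilon$ with $\|\epsilon\|_2\leq\delta$ and $\Delta^{*}$ a positive discrete measure supported on~$X$. Making this separation of roles explicit, and verifying that the existence hypothesis of a summary supplies exactly this data, is the main (and essentially only) obstacle, after which the theorem follows by direct invocation of Theorem~\ref{thm: ApproxMain}.
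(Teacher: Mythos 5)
Your proposal is correct and follows essentially the same route as the paper: both reinterpret the exact moments of $\mu$ as $\delta$-noisy moments of the hypothesized summary $\Delta^{*}$, note that problem~(\ref{prob: Summary}) is then an instance of~(\ref{prob: BLASSO}) with $\Delta^{*}$ playing the role of the true discrete measure, and invoke the approximate-recovery theorem. The only difference is cosmetic: the paper additionally perturbs the summary's support to make $X$ generic (so the degree threshold can be tied to $k$), whereas you apply Theorem~\ref{thm: ApproxMain} to an arbitrary finite $X$, which suffices since its hypotheses hold for any finite support once $d$ is large enough; your explicit remark that the proof of Theorem~\ref{thm: ApproxMain} only uses $y=\int\vec{\phi}\,d\Delta^{*}+\epsilon$ with $\|\epsilon\|_2\leq\delta$ is exactly the point the paper leaves implicit.
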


Based on the previous Theorem we propose taking the $k$ largest coefficients of a discrete minimizer $\hat{\Delta}$ of~(\ref{prob: Summary}), if such a minimizer exists, as a procedure for summarization. In Section~\ref{Sec: numerics} we present numerical examples of summarization of some measures in dimensions one and two. Our examples in dimension one show that the summarization procedure recovers good approximations of the Gauss-Chebyshev quadrature rule and suggests ways to generalize it to higher dimensions.

{\bf Acknowledgements.} We wish to thank Greg Blekherman, Fabrice Gamboa and Yohann De Castro for very useful conversations during the completion of this project. M. Junca was partially supported by the FAPA funds from Universidad de los Andes. M Velasco was partially supported by Facultad de Ciencias Uniandes grant INV-2018-50-1392. M. Junca, H. Garc\'ia and M. Velasco were partially supported by Colciencias ECOS Nord Colombia-France cooperation Grant EXT-2018-58-1548 {\it Problemas de momentos en control y optimizaci\'on}.

\section{Preliminaries}

\subsection{Representability via discrete measures} \label{Sec: TruncatedMoments}
Let $K\subseteq \RR^n$ be a compact set and let $V$ be a finite-dimensional vector subspace of the space $C(K)$ of continuous real-valued functions on $K$. By a {\it measure} on $K$ we will always mean a positive (and not a complex) measure. We will use the term {\it signed measure} to refer to measures which are real-valued but not necessarily positive. By a positive discrete measure on $K$ we mean a conic combination of Dirac delta measures supported at points of $K$. If $\nu$ is a finite Borel measure on $K$ let $L_{\nu}: C(X)\rightarrow \RR$  be the map given by $L_{\nu}(f):=\int_K f d\nu$. We say that an operator $L:V\rightarrow \RR$ is representable by a measure if there exists a finite Borel measure $\nu$ such that $L(f)=L_{\nu}(f)$ for every $f\in V$. For a vector space $V$ we denote its dual space $V^*:=\{L:V\rightarrow \RR\text{ linear}\}$ and for a cone $C\subseteq V$ let its dual $C^*:=\{\ell\in V^*: \ell(C)\geq 0\}$.

The following Lemma, due to Blekherman and Fialkow~\cite{GBF}, explains the key role played by discrete measures in truncated moment problems. It is a generalization of results of Tchakaloff~\cite{Tchakaloff} and Putinar~\cite{Putinar2}. We include a proof for the reader's benefit.

\begin{lemma} \label{Lem: basicTM} If the functions in $V$ have no common zeroes on $K$ then every linear operator $L: V\rightarrow \RR$ representable by a positive measure is representable by a positive discrete measure with at most $\dim(V)+1$ atoms.
\end{lemma}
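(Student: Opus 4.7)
The strategy is to view the claim as a finite-dimensional convex-geometric fact: the goal is to recast $L$ as a point in the conic hull of the image of $K$ under the evaluation map into $V^*$, and then to apply Carathéodory's theorem in this finite-dimensional ambient space. Write $N := \dim V$.

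First I would introduce the evaluation map $e \colon K \to V^*$ defined by $e(x)(f) := f(x)$. Continuity of the functions in $V$ makes $e$ continuous, so $e(K)$ is a compact subset of $V^*$, and the hypothesis that $V$ has no common zero on $K$ ensures $e(x) \neq 0$ for every $x \in K$. For any positive Borel measure $\mu$ on $K$ representing $L$ and any $f \in V$ one has
\[
L(f) \;=\; \int_K f \, d\mu \;=\; \int_K e(x)(f)\, d\mu(x),
\]
and testing against a basis $\phi_1,\dots,\phi_N$ of $V$ shows that $L$ itself equals the vector-valued integral $\int_K e(x)\, d\mu(x)$ taken in $V^*$.

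The heart of the argument is to promote this identity to membership in the conic hull of $e(K)$. If $\mu = 0$ there is nothing to prove; otherwise set $M := \mu(K) > 0$ and $\bar\mu := \mu/M$, so that $\frac{1}{M}L$ is the barycenter of the pushforward probability measure $e_*\bar\mu$ supported on the compact set $e(K)\subseteq V^*$. Since the convex hull of a compact subset of a finite-dimensional space is itself compact (by Carathéodory) and contains the barycenter of any probability measure it supports, one obtains $\tfrac{1}{M}L \in \mathrm{conv}(e(K))$, hence $L \in M \cdot \mathrm{conv}(e(K)) \subseteq \mathrm{cone}(e(K))$.

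Applying Carathéodory once more inside $V^*$ (of dimension $N$) writes $\tfrac{1}{M}L = \sum_{i=1}^{N+1} \lambda_i\, e(x_i)$ with $x_i \in K$, $\lambda_i \geq 0$, and $\sum_i \lambda_i = 1$; rescaling by $M$ produces $L = L_\nu$ for the positive discrete measure $\nu := \sum_{i=1}^{N+1} (M\lambda_i)\, \delta_{x_i}$, which has at most $\dim(V)+1$ atoms. The step I expect to be most delicate to record carefully is the barycenter argument: one must justify both that the vector-valued integral $\int_K e(x)\, d\mu(x)$ genuinely equals $L$ and that it lies in $\mathrm{conv}(e(K))$ rather than merely in its closure. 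Both subtleties dissolve because the target space is finite-dimensional, which is precisely what powers the whole approach; the no-common-zeros hypothesis enters only to guarantee that $e(K)$ avoids the origin, so the atoms returned by Carathéodory are genuine nonzero evaluations.
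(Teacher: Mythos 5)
Your argument is correct, but it follows a genuinely different route from the paper's. The paper works dually: it introduces the cone $P\subseteq V$ of functions nonnegative on $K$, invokes the bi-duality theorem to identify $P^*$ with the closed convex hull of the point evaluations $L_{\delta_x}$, and then uses the no-common-zeros hypothesis precisely to show that the conic hull of the compact set $\{L_{\delta_x}:x\in K\}$ avoids the origin and is therefore already closed; sandwiching the moment cone $\mathcal{M}(V)$ between the conic hull and $P^*$ then identifies $\mathcal{M}(V)$ with the cone of discrete measures, and Carath\'eodory gives the atom count. You instead argue primally: you realize $L$ as the vector-valued integral $\int_K e(x)\,d\mu(x)$ in the finite-dimensional space $V^*$, normalize to see $\tfrac1M L$ as the barycenter of the pushforward probability measure on the compact set $e(K)$, place that barycenter in $\mathrm{conv}(e(K))$ (compact by Carath\'eodory, and a separating-hyperplane argument handles the barycenter claim you flag as delicate), and finish with Carath\'eodory to get at most $\dim(V)+1$ atoms. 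Both proofs are sound; yours is more elementary in that it avoids duality and closedness-of-cone considerations, and notably it does not actually use the no-common-zeros hypothesis at all (even if some $e(x_i)=0$, the resulting discrete measure still represents $L$ with the stated atom bound), so it proves a marginally stronger statement. What the paper's route buys in exchange is structural information of independent interest for the rest of the article: it characterizes the entire representable cone $\mathcal{M}(V)$ as the closed cone generated by point evaluations, i.e.\ as $P^*$, rather than only treating one operator $L$ at a time.
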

\begin{proof} Let $P\subseteq V$ be the closed convex cone of functions in $V$ which are nonnegative at all points of $K$. It is immediate that $P={\rm Conv}(L_{\delta_x}:x\in K)^*$ where $*$ denotes the dual cone. By the bi-duality Theorem from convex geometry we conclude that $P^*= \overline{{\rm Conv}(L_{\delta_{x}}:x\in K)}$. 
Now consider the map $\phi: K\rightarrow V^*$ sending a point $x$ to the restriction of $L_{\delta_x}$ (i.e. to the evaluation at $x$). This map is continuous and therefore $S:=\phi(K)$ is a compact set. Since the functions in $V$ have no points in common the convex hull of $S$ does not contain zero and therefore the cone of discrete measures ${\rm Conv}(L_{\delta_x}:x\in K)$ is closed in $V^*$. Let $\mathcal{M}(V)\subseteq V^ *$ be the cone of operators representable by a finite borel measure. Since 
${\rm Conv}(L_{\delta_x}:x\in K) \subseteq \mathcal{M}(V)\subseteq P^*$ we conclude that $\mathcal{M}(V)$ equals the cone of discrete measures as claimed. The bound on the number of atoms follows from Caratheodory's Theorem~\cite{BarvinokConvex}.
\end{proof}

\subsection{Ideals and coordinate rings of points in projective space}
Suppose $X\subseteq \RR^n$ is a finite set of points of size $k$. To be able to make arguments with graded rings we will embed $X$ in the real projective space $\PP^n$. For basic background on graded rings and projective space the reader should refer to~\cite[Chapter 1,2,8]{CLO}.

We endow $\PP^n$ with homogeneous coordinates $[X_0:\dots: X_n]$ and identify $\RR^n$ with the open subset of $\PP^n$ where $X_0\neq 0$ via the map $\phi(x_1,\dots, x_n)=[1:x_1:\dots:x_n]$. We identify $X$ with its image under $\phi$ and define the homogeneous coordinate ring of $X$ as $A:=\RR[X_0,\dots, X_n]/I(X)$ where $I(X)$ is the ideal generated by all homogeneous polynomials vanishing at all points of $X$. Since $X\subseteq \PP^n$, the ring $A$ is standardly graded (i.e. $A_t:=\RR[X_0,\dots, X_n]_t/I(X)_t$) and is generated, as an algebra over $\RR$, by elements of degree one. Denote by $HF(A,t):=\dim_{\RR} A_t$ the Hilbert function of $A$. The following Lemma summarizes some key basic facts about the homogeneous coordinate ring of a set of $k$ points in $\PP^n$. These are well known classical results in algebraic geometry for which we provide a self-contained elementary proof (see~\cite[Chapter 3]{Eis} for further background on ideals of points in projective space).

\begin{lemma}\label{lem: points} The following statements hold:
\begin{enumerate}
\item The Hilbert function of $A$ is strictly increasing until it attains the value $k$ and then becomes constant.
\item The equality $i(X)=\min\{t: HF(A,t)=k\}$ holds and $i(X)\leq k-1$.
\item The degree of every minimal homogeneous generator of $I(X)$ is bounded above by $\alpha(X):=i(X)+1$.
\end{enumerate}
\end{lemma}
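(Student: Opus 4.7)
The plan is to analyze the evaluation maps $\mathrm{ev}_t : A_t \to \RR^k$ that send a form $f$ to $(f(p_1), \dots, f(p_k))$, using the representative of each $p_i$ in $\PP^n$ with first homogeneous coordinate equal to $1$. First I would observe that $\mathrm{ev}_t$ is injective by definition of $I(X)$, and that its image $W_t \subseteq \RR^k$ nests as $W_t \subseteq W_{t+1}$ via multiplication by $X_0$, which evaluates to $1$ at every $p_i$. Thus $\dim W_t = HF(A,t)$, and the Hilbert function is nondecreasing and bounded above by $k$. The heart of part (1) is the stabilization step: if $W_t = W_{t+1}$, I would show $W_{t+2} \subseteq W_{t+1}$ by decomposing any $g \in R_{t+2}$ as $\sum_i X_i f_i$ with $f_i \in R_{t+1}$, using the hypothesis to replace each $f_i$ by some $f_i' \in R_t$ that agrees with it on $X$, and setting $h := \sum_i X_i f_i' \in R_{t+1}$; then $h$ and $g$ evaluate identically on $X$. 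Iterating gives $W_s = W_t$ for all $s \geq t$. Since the standard explicit interpolators (for each $i$, a product of $k-1$ linear forms vanishing at $p_j$ for $j \neq i$ but not at $p_i$) show $W_{k-1} = \RR^k$, the Hilbert function must strictly increase until it reaches $k$. Part (2) then follows immediately: after homogenizing, the restriction to $X$ of polynomials of degree $\leq d$ on $\RR^n$ coincides with $W_d$, so $i(X) = \min\{d : HF(A, d) = k\}$, and the same interpolators give $i(X) \leq k-1$.

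For part (3), my plan is to reduce to an Artinian quotient and lift. I would pick a linear form $\ell \in R_1$ not vanishing at any $p_i$ (such $\ell$ form the complement of finitely many hyperplanes in $R_1$). Since $I(X)$ is radical, $\ell$ is a non-zerodivisor on $A$: if $\ell h \in I(X)$, then $h$ vanishes at every $p_i$ because $\ell(p_i) \neq 0$, so $h \in I(X)$. The short exact sequence $0 \to A(-1) \xrightarrow{\ell} A \to B \to 0$ with $B := A/\ell A$ then yields $HF(B, t) = HF(A,t) - HF(A, t-1)$, which by part (1) vanishes for $t > i(X)$. Setting $R' := R/\ell R$ and $\bar I := (I(X) + \ell R)/\ell R \subseteq R'$, this means $\bar I_t = R'_t$ for all $t \geq \alpha = i(X) + 1$. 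Since $R'$ is generated in degree $1$, the inclusion $\bar I_t = R'_t \subseteq R' \cdot R'_\alpha = R' \cdot \bar I_\alpha$ for $t > \alpha$ shows that $\bar I$ is generated in degrees $\leq \alpha$ as an ideal of $R'$.

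The final step is to lift a generating set of $\bar I$ back to generators of $I(X)$. I plan to pick lifts $f_1, \dots, f_s \in I(X)$ of such a generating set, all of degrees $\leq \alpha$, and prove by induction on $t$ that the $f_i$ generate $I(X)_t$. Given $f \in I(X)_t$, writing $\bar f = \sum_i \bar g_i \bar f_i$ in $\bar I$ and lifting yields $f - \sum_i g_i f_i = \ell r$ for some $r \in R_{t-1}$; since $f$ and $\sum_i g_i f_i$ both belong to $I(X)$, so does $\ell r$, and the non-zerodivisor property of $\ell$ forces $r \in I(X)_{t-1}$, to which the inductive hypothesis applies. The resulting bound on the degrees of a generating set implies the same bound on the degrees of any minimal generating set, proving part (3). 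The main obstacle I anticipate is the non-zerodivisor property of $\ell$, which depends crucially on $I(X)$ being radical; this is the key algebraic mechanism translating the Hilbert-function information from part (1) into a statement about generator degrees.
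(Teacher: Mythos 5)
Your proposal is correct, and for parts (1) and (2) it is essentially the paper's argument in different clothing: your evaluation images $W_t$, the inclusion $W_t\subseteq W_{t+1}$ via multiplication by $X_0$, and the "once the Hilbert function stalls it stays stalled" step are exactly the paper's injective multiplication map $m_\ell\co A_t\to A_{t+1}$ and the degree-one-generated quotient $B=A/(\ell)$, while your identification of restrictions of degree-$\leq d$ polynomials with $W_d$ is the paper's map $\phi^*$; your explicit degree-$(k-1)$ interpolators give the bound $i(X)\leq k-1$ a touch more concretely than the paper's strict-increase count. For part (3) your route is genuinely different in execution, though built on the same hyperplane-section idea. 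The paper takes the subideal $J$ generated by $I(X)_{\leq \alpha}$, forms $A'=S/J$, shows multiplication by $\ell$ is surjective on $A'$ in degrees $\geq\alpha$ (because $A'/(\ell)$ vanishes in degree $\alpha$ and is generated in degree one), and concludes $J=I(X)$ by comparing Hilbert functions along the surjection $A'\to A$ — a dimension count that never needs $\ell$ to be a non-zerodivisor. You instead pass to the Artinian reduction $R'=R/(\ell)$, observe that the image of $I(X)$ fills all of $R'$ from degree $\alpha$ on and hence is generated in degrees $\leq\alpha$, and then lift generators back to $I(X)$ by induction on degree, using the regularity of $\ell$ on $A$ (equivalently, that $I(X)$ is the full vanishing ideal) to absorb the error term $\ell r$. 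Your version is the classical "regular hyperplane section" lifting argument: it makes the role of radicality/regularity explicit and produces an explicit generating set, at the cost of an induction; the paper's version trades the lifting for a clean Hilbert-function comparison. Both are complete; the final reduction from "generated in degrees $\leq\alpha$" to "every minimal generator has degree $\leq\alpha$" that you invoke is the standard graded Nakayama fact and is fine to assert.
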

\begin{proof} $(1)$ Let $\ell\in A$ be a linear form which does not vanish at any point of $X$ (for instance $X_0$).  If $F\in A$ satisfies $\ell F=0$ then $F$ must vanish at all points of $X$ and therefore $F=0$ in $A$. We conclude that multiplication by $\ell$, $m_{\ell}: A_t\rightarrow A_{t+1}$ is injective for every $t\geq 0$ proving that $HF(A,t)$ is non-decreasing. Let $B:=A/(\ell)$ and note that for every $t$ we have $B_t=0$ if and only if $HF(A,t)=HF(A,t-1)$. Since $B$ is generated in degree one the equality $B_t=0$ for some $t$ implies that $B_{r}=0$ for all $r\geq t$. We conclude that if $t$ satisfies $HF(A,t)=HF(A,t-1)$ then the Hilbert function becomes constant after $t$, proving $(1)$. $(2)$ For any $t\in \mathbb{N}$ consider the linear map $\phi^*: A_t \rightarrow {\rm Fun}(X,\RR)$ which maps $F(X_0,\dots, X_n)$ to the polynomial function $F(1,x_1,\dots,x_n)$ of degree at most $t$ on $X$. This map is always injective and is therefore surjective whenever the dimension of $A_t$ equals the dimension $k$ of the space of all real-valued functions on $X$. To prove the inequality note that $HF(A,0)=1$ and that it increases strictly at every stage so $HF(A,k-1)\geq k$ so $i(X)\leq k-1$.

$(3)$ Let $J$ be the ideal generated by $I(X)_{\leq \alpha(X)}$ and let $S:=\RR[X_0,\dots, X_n]$. Since $J\subseteq I(X)$ there is a surjective homomorphism $A':=S/J\rightarrow A$ and we will show that it is an isomorphism by proving that $\dim A'_t=\dim A_t$ for all $t$. Define the quotient ring $Q:=A'/(\ell)$ and note that it satisfies $Q_j=B_j$ for $j\leq \alpha(X)$ and in particular $Q_{\alpha(X)}=0$. Since $Q$ is generated in degree one this implies that $Q_q=0$ for all $q\geq \alpha(X)$ and therefore multiplication by $\ell$ is surjective on $A'$ in all components $t\geq \alpha(X)$. We conclude that $\dim A'_t\geq \dim A'_{t+1}$ for $t\geq \alpha(X)$ and in particular $k\geq \dim A'_{s}$ for $s\geq \alpha(X)$. By surjectivity of $A'\rightarrow A$ we know that $\dim A'_t\geq \dim A_t=k$ for $t\geq \alpha(X)$. Putting both inequalities together we conclude that $\dim A'_t=\dim A_t$ for all $t$ as claimed. 

\end{proof}

\begin{remark} The number $\alpha(X)$ is the Castelnuovo-Mumford regularity of $X$, the key measure of the (cohomological) complexity of algebraic varieties~\cite{Mumford} (see~\cite[Chapter 4]{Eis} for details).
\end{remark}

\subsection{Generic points}\label{sec: generic}
A property of $k$-tuples of points $(p_1,\dots, p_k)\in (\PP^n)^k$ holds generically if the locus of points $(p_1,\dots, p_k)$ which satisfy it contains a nonempty Zariski open set. Equivalently, the set of points where the property fails is contained in a proper Zariski closed subset of $(\PP^n)^k$ (i.e. one defined by homogeneous polynomial equations). Following common terminology we say that {\it a generic set of points $X$ of size $k$ satisfies a property $Q$} to mean that property $Q$ holds generically. If $p_1,\dots, p_k$ are an independent sample of points in $\RR^n$ sampled from a distribution which has a density with respect to the Lebesgue measure then $p_1,\dots, p_k$ satisfies every generic property with probability one (because every proper Zariski closed set has empty interior and in particular null Lebesgue measure). Understanding generic properties should therefore be of much interest for applications since those are the only ones that arise for ''randomly chosen" or ''noisy" sets of points.

\section{A basic uniqueness result for exact super-resolution.}

\label{Sec: Uniqueness}
\begin{proof}[Proof of Theorem~\ref{thm: ExactUniqueness}] Assume $d\geq \max\left(2g(X), i(X)+1\right)$ and let $\mu:=\sum_{x\in X} c_x\delta_x$ for some real coefficients $c_x\geq 0$ and let $h_1,\dots, h_k$ be a set of generators of the ideal $I(X)$ of polynomials vanishing on $X$. Define $H:=\sum h_i^2$ and $M:=\sup_{x\in K} H(x)$. By our assumption on $d$ the polynomial $P:=1-\frac{H}{M}$ belongs to $V_{\leq d}$. By construction $P$ is a dual certificate in the sense of Cand\'es, Romberg and Tao~\cite{CRT}, this means that $\|P\|_{\infty}= 1$ on $K$ and that $P(z)=1$ if and only if $z\in X$. 
If $\Delta$ is a feasible solution of~(\ref{prob: TV}) then 
\[\|\mu\|_{\rm TV}= \mu(X)=\int_K Pd\mu =\int_K P d \Delta \leq \|\Delta\|_{\rm TV}\] 
and therefore any optimal solution $\Delta$ of~(\ref{prob: TV}) satisfies $\|\mu\|_{\rm TV}=\|\Delta\|_{\rm TV}$. For $\Delta$ an optimal solution of~(\ref{prob: TV}) we write $\Delta=\Delta_X+\Delta_X^{\perp}$ where $\Delta_X$ is supported on $X$ and $\Delta_X^{\perp}$ in $K\setminus X$. Since $|P(z)|<1$ outside $X$ we conclude that $\int_KPd\Delta_X^{\perp}<\|\Delta_X^{\perp}\|_{\rm TV}$ if $\Delta_X^{\perp}\neq 0$. It follows that 
\[\|\Delta\|_{\rm TV}=\int_K Pd\Delta < \|\Delta_X\|_{\rm TV} + \|\Delta_{X}^{\perp}\|_{\rm TV}=\|\Delta\|_{\rm TV}\]
a contradiction so $\Delta_X^{\perp}=0$ and every minimizer $\Delta$ is supported on $X$.
Since $d\geq i(X)$, there exists for each point $x\in X$ a polynomial $q_x$ in $V_{\leq d}$ which takes value one in $x$ and zero at all other points of $X$. Since $\int_Kq_xd\mu=\int_Kq_xd\Delta$ we conclude that $\Delta=\mu$ proving uniqueness. We conclude that $d(X)\leq d$ proving the inequality in part $(1)$. Furthermore, by Lemma~\ref{lem: points} part $(3)$ we know that $\alpha(X)=1+i(X)$ satisfies $g(X)\leq \alpha(X)$ and therefore $\max(2g(X),i(X))\leq 2\alpha(X)$. If $X$ is not contained in any hyperplane then $\dim(A_1)=n+1$ and therefore by Lemma~\ref{lem: points} part $(1)$ $\dim(A_t)\geq n+t$ for all $1\leq t\leq i(X)$ and we conclude that $i(X)\leq k-n$ so $\alpha(X)\leq k-n+1$, proving the claim.
$(2)$ By strong duality, uniqueness implies that there is a polynomial $P$ of degree $d(X)$ which serves as a dual certificate. It follows that $H:=1-P$ is nonnegative in $K$ and has value zero at the points of $X$. We conclude that all points of $X$ are local minima and, since $X\subseteq K^{\circ}$,  critical points for $H$. As a result, the hypersurface defined by $H$ is singular at all points of $X$. We conclude that $d(X)\geq   \ell(X)$ as claimed.\end{proof}

\begin{remark}  If $X\subseteq [a,b] \subseteq \RR$ consists of $k$ points interior to the interval then it is immediate that $g(X)=k$ and $i(X)=k+1$ so our Theorem implies uniqueness for $d\geq 2k$, giving another proof of~\cite[Proposition 2.3]{DeCastroGamboa}. This upper bound is sharp since it agrees with the lower bound $\ell(X)=2k$. 
\end{remark}

As mentioned in the Introduction, the previous Theorem highlights the enormous difference between superresolution in one and in more dimensions. Whereas in one dimension the uniqueness degree depends only on the cardinality of the set of points, in higher-dimension this degree is determined by the structure of the ideal of the set of points. 
The following two examples show that Theorem~\ref{thm: ExactUniqueness} is sharp (i.e. that the inequalities do become equalities for some sets of points) and that there are sets of points in the plane of {\it the same cardinality} for which $d(X)$ has very different behaviors (see Remark~\ref{rmk: 10pts}).

\begin{example}
For a positive integer $d$ let $X\subseteq \RR^2$ be the set of $2d$ points defined by a non-singular quadric and a generic form of degree $d$ and let $K$ be any compact set which contains $X$ in its interior. A nonsingular plane quadric can be parametrized via a map $\phi: \RR\rightarrow \RR ^2$ with quadratic monomials. It follows that if $F$ is a form of degree $\ell$ which
is singular at the points of $X$ then $F\circ \phi$ is a univariate polynomial of degree $2\ell$ which is singular at $2d$ points and therefore $2\ell\geq 4d$. We conclude from Theorem~\ref{thm: ExactUniqueness} part $(2)$ that $d(X)\geq \ 2d$. Since $g(X)=d$ and $i(X)=d$, Theorem~\ref{thm: ExactUniqueness} part $(1)$ implies that $d(X)\leq 2d$. We conclude that $d(X)=2d$ so Theorem~\ref{thm: ExactUniqueness} is sharp for infinitely many point sets in the plane.
\end{example}

\begin{example}\label{Exm: 10pts} By the genus formula~\cite[pg. 53]{ACGH} a plane curve of degree $d$ can have at most $\binom{d-1}{2}$ singular points. For plane curves of degree $6$ it is possible~\cite[Proposition 5.7]{BSV2} to construct curves $C$ where this maximum is achieved at real points and furthermore those are the only real points of $C$. It follows that there is a polynomial $P$ which defines $F$ which is nonnegative in $\RR^2$ and whose only real zeroes are the $10$ nodes. Let $X$ be the set of $10$ nodes and let $K$ be a compact set which contains $X$ in its interior. If $M$ denotes the maximum value of $P$ in $K$ then the polynomial $1-\frac{P}{M}$ is a dual certificate as in the proof of Theorem~\ref{thm: ExactUniqueness} and therefore $d(X)\leq 6$. The genus formula guarantees that no form of degree $\leq 5$ can be singular at all points of $X$ so we conclude from Theorem~\ref{thm: ExactUniqueness} part $(2)$ that $d(X)\geq 6$ and therefore $d(X)=6$. 
\end{example}

\begin{remark}\label{rmk: 10pts} It follows from the previous two examples that there are sets of points $X$ of cardinality $10$ with $d(X)=10$ and with $d(X)=6$ depending on the structure of their ideal of definition.
\end{remark}

\begin{proof}[Proof of Theorem~\ref{thm: GenericExactUniqueness}] $(1)$ The Hilbert function of the homogeneous coordinate ring $A$ of a generic set of $k$ points in $\PP^n$ is given by
\[ HF(A,t)=\min\left(\binom{n+t}{t},k\right)\]
which coincides with $k$ for the smallest $e$ with $\binom{n+e}{e}\geq k$. We conclude that $i(X)=e$. Moreover, 
by Lemma~\ref{lem: points} we know that $\alpha(X)= i(X)+1$ and $\max(2g(X),i(X))\leq 2\alpha(X)$ proving the claim from Theorem~\ref{thm: ExactUniqueness} part $(1)$.

$(2)$ Since $X\subseteq \RR^n$, vanishing with multiplicty at least two at a point of $X$ imposes $n+1$ linear conditions (vanishing at the point and vanishing of the $n$-partial derivatives at the point). Since the points of $X$ are generic, such linear conditions are independent obtaining $k(n+1)$ independent conditions. It follows that there does not exist a polynomial $F$ which is singular at all points of $X$ whenever $\binom{n+d}{d}< k(n+1)$ proving the inequality by Theorem~\ref{thm: ExactUniqueness} part $(2)$.
\end{proof}

\begin{remark} The maximum in the quantity $\max(2g(X),i(X))$ of Theorem~\ref{thm: ExactUniqueness} can be achieved in either side as the following examples show. If $X$ is a complete intersection of $n$ quadrics in $\PP^n$ then $g(X)=2$ and $i(X)=n-1$ so $i(X)>2g(X)$ for $n\geq 5$. If $d>0$ and $X$ is a generic set of $\binom{d+n}{n}$ points in $\PP^n$ then $g(X)=d+1$ and $i(X)=d$ so $2g(X)>i(X)$.
\end{remark}

We believe that the true value of the uniqueness degree for generic sets of points is when it is equal to the lower bound in Theorem~\ref{thm: GenericExactUniqueness}. We think this is the case because the space of measures supported on $k$ points in $\RR^n$ has dimension $k(n+1)$ ($n$ coefficients for specifying the location of each point and one more for specifying the accompanying coefficient). 
As a result, if the lower bound from Theorem~\ref{thm: GenericExactUniqueness} is satisfied then we have enough linear measurements to encode the space of measures (at least locally) and we believe this should be enough for convex optimization to be able to recover a point measure uniquely.  

\begin{remark} The degrees of all minimal generators, and more generally the structure of the minimal free resolutions of ideals of points in $\PP^2$ are well understood (See~\cite[Chapter 3]{Eis} for details). By contrast the minimal free resolution of even generic sets of points $s$ in $\PP^n$ for $n>2$ is widely open. The conjectural answer suggested by Lorenzini~\cite{Lorenzini} was later disproved in celebrated work by Eisenbud and Popescu~\cite{EisenbudPopescu}.
\end{remark}

\subsection{A moments relaxation}\label{Sec: moments} Mirroring the proof of Theorem~\ref{thm: ExactUniqueness} one can use the following problem of moments reconstruction procedure for recovering $\mu$ given its moments operator $L:V_{\leq 2d}\rightarrow \RR$ with $L(g):=\int_K gd\mu$ on polynomials of degree at most $2d$. 
\begin{enumerate}
\item Finding the {\it support} of $\mu$ by constructing a minimizer $H^*$ of the optimization problem $\min_H L(H)$ where $H$ runs over the sums-of-squares of elements of $V_{\leq d}$. More explicitly if $\vec{\phi}$ is a basis for $V_{\leq d}$ then we find $H^*$ by solving the semidefinite programming problem:
\[ \min L\left(\vec{\phi}^tA\vec{\phi}\right) \text{ s.t. $A\succeq 0$ \text{ and } ${\rm tr}(A)=1.$ }\]
and find the support of $\mu$ by finding the zeroes of $H^*$ in $K$ (the trace restriction prevents the trivial solution).

\item Finding the {\it coefficients} of $\mu$ by linear algebra. If $z_1,\dots, z_k$ are the zeroes of $H^*$ we find the coefficients $c_1,\dots, c_k$ by solving the linear equations $\sum_{i=1}^k c_i f(z_i)=L(f)$ for $f\in V$. 
\end{enumerate}
Theorem~\ref{thm: ExactUniqueness} guarantees that the procedure works for $d$ above the upper bound and Theorem~\ref{thm: GenericExactUniqueness} gives an explicit upper bound for measures supported on generic points. We finish the Section with two remarks about the above procedure:
\begin{enumerate}
\item Assume $H^*$ is any minimizer in the relative interior of the face $L(H)=0$ of convex cone $Q$ of the sums-of-squares of elements in $V_{\leq d}$ with $d\geq g(X)$. Then $H^*$ must have $X$ as its only real zeroes since otherwise evaluation at any additional zero would define a proper face of $Q$ containing an interior point and hence all of $Q$. In particular the kernel of this evaluation would contain the dual certificate constructed in the proof of Theorem~\ref{thm: ExactUniqueness} all of whose real zeroes lie on $X$ deriving a contradiction. As a result, interior point numerical methods for solving the SDP would produce optima $H^*$ with $X$ as its set of zeroes, as can be seen in our numerical examples in Section~\ref{Sec: numerics}.

\item ({\it Sharpness}) If $X$ is a generic set of $k=\binom{e+n}{n}$ points in $\PP^n$ then $g(X)=i(X)=e+1$ and Theorem~\ref{thm: GenericExactUniqueness} shows that there is unique recovery when $d\geq 2(e+1)$. We claim that, {\it if the recovery is carried out with the sum-of-squares procedure above} then this bound is sharp in the sense that the recovery would fail for $d<2(e+1)$. The reason is that every sum of squares $H=\sum P_i^2$ which vanishes at the points would have summands $P_i$ of degree less than $e+1$ and therefore be identically zero on $X$ because $I(X)$ contains no forms of degree less than $e+1$. Note that this does not preclude the existence of lower degree certificates that are not sums-of-squares as the one appearing in Example~\ref{Exm: 10pts}.
\end{enumerate}

\section{Approximate recovery}

In this section we focus on the problem of approximate recovery. The following key property was proposed by Azais, De Castro and Gamboa as central for BLASSO quantitative localization results. We modify their definition slightly since our interest is the recovery of positive measures and not of signed measures.
It is well known in the super-resolution community that positivity of the measure is a strong assumption~\cite{MC, ETTTT, DDP}. We believe that it is a reasonable starting point for trying to extend the super-resolution results to the more complicated higher-dimensional setting.

\begin{definition}(Quadratic isolation condition)\cite[Definition 2.2]{AzaisDeCastroGamboa} A finite set $X\subseteq K$ satisfies a quadratic isolation condition with parameters $C_a>0$ and $0<C_b<1$ respect to $V$ if there exists $P\in V$ satisfying $\|P\|_{\infty}\leq 1$ on $K$, $P\equiv 1$ on $X$ and such that the following inequality holds 
\[\forall z\in K\left( P(z)\leq \max\left\{1-C_ad(z,X)^2, 1-C_b\right\}\right)\]
\end{definition}
In that case we say that $P$ is a witness for a QIC condition on $X$.

\begin{lemma}\label{lem: QIC} If $d\geq 2g(X)$ then $X$ satisfies a quadratic isolation condition on $V_{\leq d}$. 
\end{lemma}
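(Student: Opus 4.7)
The plan is to reuse the dual certificate constructed in the proof of Theorem~\ref{thm: ExactUniqueness}. Let $h_1,\ldots,h_s$ be a set of generators of $I(X)$ with $\deg h_i \leq g(X)$, set $H := \sum_{i=1}^s h_i^2$ and $M := \sup_{K} H$, and define $P := 1 - H/M$. Since $\deg H \leq 2g(X) \leq d$, one has $P \in V_{\leq d}$, and by construction $P \equiv 1$ on $X$ and $0 \leq P \leq 1$ on $K$, so $\|P\|_\infty \leq 1$. The problem therefore reduces to establishing a lower bound of the form
\[ H(z) \geq M \min\{C_a\, d(z,X)^2,\; C_b\} \qquad (z \in K), \]
which rearranges into the QIC inequality for $P$ with parameters $C_a, C_b$.

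To obtain this bound I would split $K$ into a small neighborhood of $X$ and its complement. For the neighborhood, the key algebraic input is that since $X$ is a finite set of real points, Lagrange interpolation gives an isomorphism $\RR[\vec{x}]/I(X) \cong \RR^X$, and after extending scalars to $\CC$ the scheme $\Spec(\RR[\vec{x}]/I(X))$ consists of exactly $|X|$ reduced points. It follows that at each $x \in X$ the Jacobian $J_x = (\partial h_i/\partial x_j)(x)$ has rank $n$, so $J_x^T J_x$ is positive definite with smallest eigenvalue $\lambda_x > 0$. A second-order Taylor expansion of the $h_i$ at $x$ then gives
\[ H(x+v) = v^T J_x^T J_x\, v + O(|v|^3) \geq \tfrac{1}{2}\lambda_x |v|^2 \]
for $|v|$ below a threshold depending on $x$. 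Since $X$ is finite, this yields uniform constants $r,\alpha>0$ such that $H(z) \geq \alpha\,d(z,X)^2$ on $N(X,r)\cap K$. On the complementary compact set $F(X,r)\cap K$, the continuous function $H$ is strictly positive (its only zeros in $K$ being the points of $X$), so $\beta := \inf_{F(X,r)\cap K} H > 0$.

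Setting $C_a := \alpha/M$ and $C_b := \min\{\beta,\alpha r^2\}/(2M)$ (the factor $1/2$ guarantees $C_b < 1$), the proof finishes by a case analysis: on $N(X,r) \cap K$ one directly has $H/M \geq C_a d(z,X)^2$, while on $F(X,r)\cap K$ both $H/M \geq \beta/M \geq 2C_b$ and $C_a d(z,X)^2 \geq C_a r^2 \geq 2C_b$ exceed $C_b$, so in either region $H/M \geq \min\{C_a d(z,X)^2, C_b\}$. The step I expect to be the main obstacle is the derivation of the quadratic lower bound near $X$: one must exploit that $\{h_i\}$ generates \emph{the full} ideal $I(X)$ (not merely $\mathfrak{m}_x$ locally) to guarantee that the Jacobian has rank $n$. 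This is what separates the geometric situation from the merely analytic fact that $H$ has a local minimum at $x$, and it is the real source of the $O(d(z,X)^2)$ rate in the QIC condition.
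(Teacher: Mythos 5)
Your proposal is correct and follows essentially the same route as the paper's proof: the same certificate $P = 1 - \frac{1}{M}\sum_i h_i^2$ built from generators of $I(X)$, the same use of the full-rank Jacobian (equivalently, positive-definite Hessian of $H$) at the points of $X$ to get the quadratic lower bound near $X$, and compactness of the far region to get $C_b$. Your treatment of the constants and of why the Jacobian has rank $n$ (reducedness of the zero-dimensional scheme cut out by $I(X)$) is, if anything, slightly more explicit than the paper's.
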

\begin{proof} Let $f_1,\dots, f_s$ be a set of minimal generators of the ideal $I(X)$ of polynomials vanishing on $X$ and define $H:=\sum f_i^2$ and $M:=\sup_{x\in K} H(x)$. By our assumption on $d$ the polynomial $P:=1-\frac{H}{M}$ is nonnegative, belongs to $V_{\leq d}$ and is identical to one on $X$.
Since $f_1,\dots, f_s$ are generators of the ideal $I(X)$ and $X$ is a nonsingular variety the differential of the map $\mathcal{H}:\RR^n\rightarrow \RR^s$ given by $\mathcal{H}(x)=(f_1(x),\dots, f_s(x))$ has trivial kernel at every $x\in X$. As a result, the Hessian at $x\in X$ of the polynomial $H$ is positive definite and in particular there exist positive real numbers $\eta_x$ and $(C_a)_x$ such that $1-P(z)\geq (C_a)_x\|z-x\|^2$ for $z$ with $\|z-x\|\leq \eta_x$. Define $\delta=\min_{x\in X}\eta_x$, $C_a:=\min_{x\in X} (C_a)_x$ and let $C_b=\inf_{z: d(z,X)\geq \frac{\delta}{2}}\left(1-P(z)\right)$. We conclude that $X$ satisfies a quadratic isolation condition with parameters $C_a$ and $C_b$ with $0<C_b<1$ and $C_a>0$.  
\end{proof}

The following Lemma, of interest in its own right, extracts the essence of~\cite[Theorem 2.1]{AzaisDeCastroGamboa}. It is the key technical tool for converting QIC witnesses into quantitative localization guarantees. Henceforth we will assume that $\phi_0,\dots, \phi_T$ are a basis for our space of functions which is orthonormal with respect to some probability measure on $K$ 

\begin{lemma}\label{lem: key} Let $P$ be a polynomial with $\|P\|_{\infty}\leq 1$ on $K$. The following statements hold:
\begin{enumerate}
\item If $\hat{\Delta}$ is feasible for~(\ref{prob: BLASSO}) then the following inequality holds
\[\left|\int_K Pd\hat{\Delta}-\int_K Pd\mu\right|\leq 2\delta.\]

\item If $\hat{\Delta}$ is a minimizer of~(\ref{prob: BLASSO}) then the following inequality holds
\[ 0\leq \|\hat{\Delta}\|_{\rm TV}-\int_K Pd\hat{\Delta} \leq 2\delta.\]
\end{enumerate}
\end{lemma}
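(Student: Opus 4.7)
The plan is to reduce both inequalities to a Cauchy--Schwarz estimate on the coefficient vector of $P$ in the given orthonormal basis $\phi_0,\dots,\phi_T$. The initial observation is that, because $(\phi_i)$ is orthonormal with respect to a \emph{probability} measure $\sigma$ on $K$, expanding $P=\sum_{i=0}^T a_i\phi_i$ gives
\[
\|a\|_2^2=\int_K P^2\,d\sigma\le\|P\|_\infty^2\,\sigma(K)\le 1,
\]
so $\|a\|_2\le 1$. This is the source of the universal factor $2\delta$ on the right-hand side of both statements.

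For part~(1), I would write $m_i(\nu):=\int_K\phi_i\,d\nu$ and use $y_i=m_i(\mu)+\epsilon_i$ to decompose
\[
\int_K P\,d\hat\Delta-\int_K P\,d\mu=\sum_{i=0}^T a_i\bigl((m_i(\hat\Delta)-y_i)+\epsilon_i\bigr).
\]
Applying Cauchy--Schwarz and the triangle inequality, combined with feasibility of $\hat\Delta$ in~(\ref{prob: BLASSO}) (so $\|m(\hat\Delta)-y\|_2\le\delta$) and the hypothesis $\|\epsilon\|_2\le\delta$, gives the bound $\|a\|_2(\delta+\delta)\le 2\delta$.

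For part~(2), the lower bound is immediate from the Jordan decomposition $\hat\Delta=\hat\Delta^+-\hat\Delta^-$: one rewrites
\[
\|\hat\Delta\|_{\rm TV}-\int_K P\,d\hat\Delta=\int_K(1-P)\,d\hat\Delta^+ +\int_K(1+P)\,d\hat\Delta^-\ge 0,
\]
both integrands being nonnegative on $K$ thanks to $\|P\|_\infty\le 1$. For the upper bound, I would first observe that $\mu$ itself is feasible for~(\ref{prob: BLASSO}), since $\|y-m(\mu)\|_2=\|\epsilon\|_2\le\delta$, so optimality of $\hat\Delta$ yields $\|\hat\Delta\|_{\rm TV}\le\|\mu\|_{\rm TV}$. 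Combining this with the implicit assumption on $P$ being used in the intended application (a QIC witness: $P\equiv 1$ on $\mathrm{supp}(\mu)$, which together with $\mu\ge 0$ forces $\int_K P\,d\mu=\|\mu\|_{\rm TV}$) and then invoking part~(1) closes the chain
\[
\|\hat\Delta\|_{\rm TV}-\int_K P\,d\hat\Delta\le\|\mu\|_{\rm TV}-\int_K P\,d\hat\Delta=\int_K P\,d\mu-\int_K P\,d\hat\Delta\le 2\delta.
\]

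The only delicate point is precisely this upper bound: it would fail for a generic $P$ with only $\|P\|_\infty\le 1$ (take $P\equiv 0$, where $\|\hat\Delta\|_{\rm TV}$ need not be small), so the statement implicitly needs $P$ to saturate the inequality $\int_K P\,d\mu\le\|\mu\|_{\rm TV}$, as any QIC witness for $\mathrm{supp}(\mu)$ does. Everything else reduces to Parseval's identity in a probability-normalized orthonormal basis plus two standard applications of Cauchy--Schwarz.
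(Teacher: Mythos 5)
Your proof is correct and follows essentially the same route as the paper: part (1) is the identical Cauchy--Schwarz/triangle-inequality/Parseval estimate, and for part (2) the paper likewise combines feasibility of $\mu$, optimality of $\hat\Delta$, and the equality $\int_K P\,d\mu=\|\mu\|_{\rm TV}$ (it merely redoes the coefficient estimate instead of invoking part (1) as you do). The ``delicate point'' you flag is genuine: the paper's proof silently asserts $\int_K P\,d\mu=\|\mu\|_{\rm TV}$, which holds only because in the intended application $P\equiv 1$ on $\mathrm{supp}(\mu)$ and $\mu\geq 0$ (as for the QIC witnesses), so the extra hypothesis you make explicit is exactly what the paper uses implicitly.
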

\begin{proof}  For $(1)$ suppose $P=\sum_i a_i \phi_i$ and note that
\[\left|\int_K Pd\hat{\Delta}-\int_K Pd\mu\right| \leq \sum_i |a_i|\left|\int_K \phi_i d\hat{\Delta}-\int_K \phi_i d\mu\right|.\]
By the Cauchy-Schwartz inequality this quantity is bounded above by
\[\leq \|a\|_2\left\|\left(\int_K \phi_i d\hat{\Delta}-\int_K \phi_i d\mu\right)_i\right\|_2\leq \|a\|_2 2\delta\]
where the last inequality holds because, by feasibilty of $\hat{\Delta}$
\[\left\|\left(\int_K \phi_i d\hat{\Delta}-\int_K \phi_i d\mu\right)_i\right\|_2\leq \left\|\left(\int_K \phi_i d\hat{\Delta}-y_i\right)\right\|_2+ \left\|\left(y_i -\int_K \phi_i d\mu\right)_i\right\|_2\leq 2\delta.\]
Next, using the fact that the $\phi_i$ are orthonormal with respect to some probability measure $\tau$, we conclude by Parseval's equality that $\|a\|_2=\|P\|_{L^2(\tau)}$ and this quantity is at most one since $\|P\|_{\infty}\leq 1$ proving the claim.

$(2)$ Since the measure $\mu$ is feasible for~(\ref{prob: BLASSO}) we know that $\|\mu\|_{\rm TV}\geq \|\hat{\Delta}\|_{\rm TV}$. Since $P$ satisfies $\|P\|_{\infty}\leq 1$ on $K$ we know that 
\[\|\hat{\Delta}\|_{\rm TV}\geq \int_KPd\hat\Delta = \sum a_i \int_K\phi_id\hat\Delta = \sum a_i\left(r_i+\int_K\phi_i d\mu +\epsilon_i\right)\]
where $r_i:=\int_K\phi_id\hat\Delta-y_i$ and $y_i=\int_K\phi_id\mu+\epsilon_i$. Since $\sum a_i\int_K\phi_id\mu =\int_KPd\mu=\|\mu\|_{\rm TV}$ we conclude that
\[\|\mu\|_{\rm TV}+\sum a_i(r_i+\epsilon_i)\leq \int_K Pd\hat{\Delta}\leq \|\hat{\Delta}\|_{\rm TV}\leq\|\mu\|_{\rm TV}\]
So the difference between the last and first terms is an upper bound for the difference between the interior terms yielding

\[0\leq \|\hat{\Delta}\|_{\rm TV}-\int_K Pd\hat{\Delta}\leq \left|\sum a_i(r_i+\epsilon_i)\right|\leq \|a\|_2\left(\|r\|_2+\|\epsilon\|_2\right)\leq 2\delta\|a\|_2\]
where the last two inequalities follow from the Cauchy-Schwartz and triangle inequalities. Using Parseval's equality we obtain the desired conclusion. \end{proof}

The following Theorem explains how a Quadratic Isolation condition leads to a quantitative localization guarantee. In words it says that large recovered spikes cannot occur far from the support of the original measure.

\begin{theorem}\label{thm: Approx} Let $\mu$ be any positive discrete measure supported on a finite set $X$ and let $\hat{\Delta}$ be a discrete minimizer of~(\ref{prob: BLASSO}) with $V=V_{\leq d}$, $y_i:=\int\phi_id\mu +\epsilon_i$ and $\|(\epsilon_i)_i\|_2\leq \delta$. If $d\geq 2g(X)$ then there exist constants $C_a>0$ and $0<C_b<1$ depending only on $X$ such that if $c_0:=\sqrt{\frac{C_b}{C_a}}$ then the following statements hold:
\begin{enumerate}
\item If $z\in K$ is such that $\hat{\Delta}(z)>\frac{2\delta}{C_b}$ then $d(X,z)\leq c_0$.
\item The following inequalities hold:
\[
\sum_{z\in N\left(X,c_0\right), \hat\Delta(z)>0} \hat{\Delta}(z) d(X,z)^2\leq \frac{2\delta}{C_a}\]
\[\sum_{z\in F\left(X,c_0\right), \hat\Delta(z)>0} \hat{\Delta}(z) \leq \frac{2\delta}{C_b}\]
\[\sum_{z: \hat\Delta(z)<0}|\hat{\Delta}(z)|\leq 2\delta\]
\end{enumerate}
\end{theorem}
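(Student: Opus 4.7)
The plan is to combine the QIC witness from Lemma~\ref{lem: QIC} with the two bounds of Lemma~\ref{lem: key}, applied both to this witness and to the constant polynomial $1$. Since $d\geq 2g(X)$, Lemma~\ref{lem: QIC} produces a polynomial $P\in V_{\leq d}$ with $\|P\|_\infty\leq 1$ on $K$, $P\equiv 1$ on $X$, and parameters $C_a>0$, $0<C_b<1$ such that $1-P(z)\geq \min\{C_a d(z,X)^2,\,C_b\}$. The central algebraic step is the decomposition
\[ \|\hat\Delta\|_{\rm TV}-\int_K P\,d\hat\Delta \;=\; \sum_{z:\,\hat\Delta(z)>0}(1-P(z))\hat\Delta(z) \;+\; \sum_{z:\,\hat\Delta(z)<0}(1+P(z))|\hat\Delta(z)|, \]
obtained by splitting the (finite) support of the signed discrete measure $\hat\Delta$ according to sign. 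Because $\|P\|_\infty\leq 1$ on $K$, every summand on the right is nonnegative, so Lemma~\ref{lem: key}(2) implies that \emph{each} of the two sums is bounded above by $2\delta$.

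From the first sum and the QIC lower bound on $1-P$, with the choice $c_0=\sqrt{C_b/C_a}$, I would split the positive-spike support into the near set $N(X,c_0)$, where the minimum is $C_a d(z,X)^2$, and the far set $F(X,c_0)$, where the minimum is $C_b$. Substituting and separating yields
\[ C_a\sum_{z\in N(X,c_0),\,\hat\Delta(z)>0}\hat\Delta(z)\,d(z,X)^2 \;+\; C_b\sum_{z\in F(X,c_0),\,\hat\Delta(z)>0}\hat\Delta(z) \;\leq\; 2\delta, \]
and since both pieces are nonnegative this gives the first two inequalities of part (2). Part (1) is then an immediate contrapositive: a single positive spike with $\hat\Delta(z)>2\delta/C_b$ lying in $F(X,c_0)$ would already violate the far-point inequality.

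The third inequality cannot be extracted from the QIC witness alone because the contribution $(1+P(z))|\hat\Delta(z)|$ has no useful lower bound far from $X$. Instead I would apply Lemma~\ref{lem: key}(1) to the constant polynomial $P\equiv 1\in V_{\leq d}$, which trivially satisfies $\|P\|_\infty\leq 1$. Writing $A=\sum_{\hat\Delta(z)>0}\hat\Delta(z)$ and $B=\sum_{\hat\Delta(z)<0}|\hat\Delta(z)|$, this gives $A-B\geq \|\mu\|_{\rm TV}-2\delta$, while optimality of $\hat\Delta$ against the feasible competitor $\mu$ yields $A+B=\|\hat\Delta\|_{\rm TV}\leq \|\mu\|_{\rm TV}$; subtracting produces $2B\leq 2\delta$, as claimed (in fact $B\leq \delta$). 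The only real subtlety is recognizing that the three estimates do not all flow from a single polynomial witness: the localization bounds use the QIC witness $P$, whereas the negative-mass control requires the constant polynomial together with the minimality of $\hat\Delta$; beyond this the proof is a short bookkeeping exercise.
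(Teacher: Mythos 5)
Your proposal is correct, and for part (1) and the first two inequalities of part (2) it follows the paper's proof exactly: the same witness from Lemma~\ref{lem: QIC}, the same bound from Lemma~\ref{lem: key} part (2), and the same splitting of the positive spikes into $N(X,c_0)$ and $F(X,c_0)$ according to which term attains the minimum in the QIC bound. The only divergence is the negative-mass estimate. You assert it ``cannot be extracted from the QIC witness alone,'' but the paper does extract it from that witness: the polynomial constructed in Lemma~\ref{lem: QIC} is $P=1-H/M$ with $0\leq H\leq M$ on $K$, hence $P\geq 0$ there, so in the sign decomposition the negative spikes contribute $(1+P(z))|\hat\Delta(z)|\geq |\hat\Delta(z)|$ and the bound $\sum_{\hat\Delta(z)<0}|\hat\Delta(z)|\leq 2\delta$ drops out of the very same inequality as the other two. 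Your substitute---applying Lemma~\ref{lem: key} part (1) to the constant polynomial $1\in V_{\leq d}$ and combining with $\|\hat\Delta\|_{\rm TV}\leq\|\mu\|_{\rm TV}$---is perfectly valid, in fact yields the sharper bound $\delta$, and has the advantage of not relying on nonnegativity of the witness, so it works for an arbitrary QIC certificate with only $\|P\|_{\infty}\leq 1$; the price is an extra invocation of feasibility/optimality that the paper's one-line argument avoids.
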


\begin{proof} By Lemma~\ref{lem: QIC} the set $X$ satisfies a quadratic isolation condition with parameters $C_a>0$ and $0<C_b<1$. If $P$ is the witness constructed in Lemma~\ref{lem: QIC} then Lemma~\ref{lem: key} implies that any minimizer $\hat\Delta$ of problem~(\ref{prob: BLASSO}) satisfies 
\[0\leq \|\hat\Delta\|_{\rm TV}-\int_KPd\hat{\Delta}\leq 2\delta.\]

Assuming $\hat\Delta=\sum_{z\in K} \hat\Delta(z)\delta_{z}$ we estimate the quantity in the middle using the fact that, by the QIC, the inequality
\[1-P(z)\geq \min\{C_ad(X,z)^2,C_b\}\]
holds. Separating the coefficients of $\hat{\Delta}$ into three sets: negative coefficients, and two sets of positive coefficients according to which of the two terms achieves the minimum in $\min\{C_ad(z,X)^2, C_b\}$ we obtain, since $P\geq 0$, the inequality

\begin{small}
\[2\delta\geq \sum_{\hat\Delta(z)<0} |\hat\Delta(z)| + \sum_{\hat\Delta(z)>0,d(z,X)^2\leq \frac{C_b}{C_a}} \hat\Delta(z) C_a d(z,X)^2 +  \sum_{\hat\Delta(z)>0,d(z,X)^2>\frac{C_b}{C_a}}\hat\Delta(z) C_b\]
\end{small}
holds, from which the three inequalities in part $(2)$ of the Theorem follow immediately. 
\end{proof}

\subsection{Quantitative results on approximate recovery.}
\label{Sec: QuantitativeApprox}
Theorem~\ref{thm: Approx} shows that a witness of a quadratic isolation condition gives quantitative bounds for the approximation quality of solutions of the super-resolution problem~\eqref{prob: BLASSO}. In this section we study how the constants $C_a$ and $C_b$ depend on the geometry of the underlying support set. Our main result is the proof of Theorem~\ref{thm: ApproxMain} which answers the key question of how the approximation constants vary as functions of degree and error-size. In Example~\ref{Exo: Grids} we specialize our results to the case of grids in $\RR^n$ connecting the above constants with the distances between pairs of nearby points. We begin by strengthening Lemma~\ref{lem: QIC}.

\begin{lemma}\label{lem: genpoints} Let $X\subseteq K\subseteq \RR^n$ be a finite set. Suppose $I(X)=(f_1,\dots, f_s)$ and let $h=(f_1/M_1)^2+\dots + (f_s/M_s)^2$ with $M_i:=\sup_{z\in K} f_i(z)$. There exist positive constants $\eta, D, D_1$ such that:
\begin{enumerate}
\item For all $z\in N(X,\eta)$ $h(z)\geq D d(z,X)^2$,
\item For every $z\in F(X,\eta)$ there exists an index $j(z)\in \{1,\dots, s\}$ such that $|f_j(z)/M_j|\geq D_1$. 
\end{enumerate}
\end{lemma}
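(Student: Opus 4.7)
The strategy is to treat the two assertions separately: part (1) is local around each point of $X$, while part (2) reduces to a compactness argument on $K$.

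For part (1), the key observation is that since $f_1,\dots,f_s$ generate the ideal $I(X)$ and $X$ is a finite (hence smooth, zero-dimensional) subset of $\RR^n$, the Jacobian of the map $z\mapsto (f_1(z),\dots,f_s(z))$ has trivial kernel at every $x\in X$. Consequently, the Hessian of $h$ at each $x\in X$, which equals $2\sum_i M_i^{-2}\,\nabla f_i(x)\nabla f_i(x)^T$, is positive definite. Writing the second-order Taylor expansion of $h$ about $x$ (using $h(x)=0$ and $\nabla h(x)=0$), I obtain a radius $\eta_x>0$ and a constant $D_x>0$ with $h(z)\geq D_x\|z-x\|^2$ whenever $\|z-x\|\leq \eta_x$. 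Choosing $\eta$ smaller than $\min_{x\in X}\eta_x$ and also less than half the minimum pairwise distance in $X$ ensures that every $z\in N(X,\eta)$ has a unique closest point $x^*\in X$ with $\|z-x^*\|=d(z,X)$; setting $D:=\min_{x\in X} D_x$ then yields $h(z)\geq D\, d(z,X)^2$ on $N(X,\eta)$, as desired.

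For part (2), I would observe that $h$ is continuous on $K$ and that its zero locus in $K$ equals $X$, since the $f_i$ generate $I(X)$. The set $F(X,\eta)\cap K$ is compact and disjoint from $X$, so $h$ attains a positive minimum $m>0$ on it. At any $z\in F(X,\eta)\cap K$ an averaging (pigeonhole) argument provides an index $j(z)$ for which $(f_{j(z)}(z)/M_{j(z)})^2 \geq m/s$, so $D_1:=\sqrt{m/s}$ works.

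The main subtlety lies in part (1): the argument hinges on the Hessian of $h$ being positive definite at each $x\in X$, which in turn requires the $f_i$ to capture the full local structure of $I(X)$, i.e.\ that their Jacobian has rank $n$ at each $x\in X$. This is automatic under our hypothesis that the $f_i$ generate all of $I(X)$, rather than merely vanish on $X$. Once this is in hand, the rest is a routine Taylor expansion together with a minimum over the finite set $X$; uniformity across the clusters of $N(X,\eta)$ around distinct points of $X$ is guaranteed by the pairwise-distance condition imposed on $\eta$.
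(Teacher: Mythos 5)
Your proposal is correct and follows essentially the same route as the paper: a second-order expansion of $h$ at each $x\in X$, with positive definiteness of the quadratic term coming from the fact that generators of $I(X)$ have full-rank differentials at the (smooth, zero-dimensional) points of $X$, followed by taking minima over the finite set $X$ and a compactness-plus-pigeonhole argument on $F(X,\eta)$ for part (2). Your explicit requirement that $\eta$ be smaller than half the minimum pairwise distance in $X$ makes the ``unique closest point'' step cleaner than the paper's implicit handling of it.
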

\begin{proof} $(1)$ If $x^*\in X$, $z\in \RR^n$ and $j=1,\dots, s$ then $\frac{f_j(z)}{M_j}=\ell_j(z-x^*)+o(\|z-x^*\|^2)$ where $\ell_j(z-x^*)=\langle\nabla \left(\frac{f_j(x^*)}{M_j}\right),z-x^*\rangle$ and therefore there exists $\eta_{x^*}>0$ such that, if $\|z-x^*\|\leq \eta_{x^*}$ then 
\[h(z)=\sum_{j=1}^s \frac{f_j(z)}{M_j}^2\geq \sum \frac{1}{2}\ell_j(z-x^*)^2\]
and such that $x^*$ is the closest point of $X$ to any $z$ with $\|z-x^*\|\leq \eta_{x^*}$. Since $f_1,\dots, f_s$ define the non-singular variety $X$ the linear terms $\ell_j$ have no common zeroes and in particular the quadratic form in the right hand side is positive definite. It follows that there exists a constant $A_{x^*}>0$ such that 
\[h(z)\geq  \sum \frac{1}{2}\ell_j(z-x^*)^2\geq A_{x^*}\|z-x^*\|^2\]

Choosing $\eta:=\min_{x^*\in X} \eta_{x^*}$ and $D=\min_{x^*\in X} A_{x^*}$ we conclude that, whenever $d(z,X)\leq \eta$ the inequality $h(z)\geq D d(z,X)^2$ holds.  $(2)$ The set $F(X,\eta)$ is compact and therefore the continuous function $h(z)$ achieves a minimum value $\gamma>0$ on it. We conclude that for every $z\in F(X,\eta)$ there exists an index $j$, which may depend on $z$, such that $|f_j(z)/M_j|\geq \sqrt{\gamma/s}$. Letting $D:=\sqrt{\gamma/s}$ proves the claim.
\end{proof}

For the following Lemma we will use some basic properties of Chebyshev polynomials. Recall that the $n$-th Chebyshev polynomial $T_n(x)$ is the unique univariate polynomial of degree $n$ which satisfies the equality $T_n(x)=\cos(n\arccos(x))$ for $x\in [-1,1]$ or equivalently 
\[ T_n(\cos(\theta)) = \cos(n\theta)={\rm Re}(e^{in\theta})\text{ for $\theta\in [0,\pi]$}.\]
The Chebyshev polynomials have many remarkable properties, for instance they are orthonormal in $[-1,1]$ with respect to the weight function $(1-x^2)^{-\frac{1}{2}}$. We will use the Chebyshev polynomials to construct a special approximation of the Dirac $\delta$ distribution centered at the origin in $\mathbb{R}$. We will then use these univariate approximations to build witnesses of the Quadratic isolation condition for finite sets of $\RR^n$. The technical tools are summarized in the following elementary Lemma whose content is visualized in Figure~\ref{Fig: Sombrero} below.

\begin{figure}[t]
\centering
\includegraphics[width=0.7\textwidth]{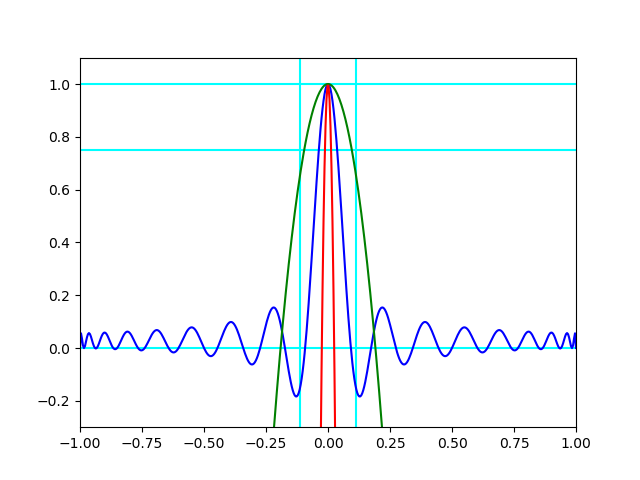}
\caption{The polynomial $H_{18}(x)$ (blue) and its lower (red) and upper (green) bounds}
\label{Fig: Sombrero} 
\end{figure}

\begin{lemma} \label{lem: sombrero}
For an even positive integer $m$ define \[H_m(x)=\frac{1}{m}\left(\sum_{k=0}^{m-1} (-1)^k T_{2k}(x)\right).\]
The following statements hold
\begin{enumerate}
\item $H_m(x)$ is a polynomial of degree $2(m-1)$.
\item $|H_m(x)|\leq 1$ for $x\in [-1,1]$ and the maximum value of one is achieved only at $x=0$. 
\item The following equality holds for $z\in \left[-\frac{\pi}{2},\frac{\pi}{2}\right]$
\[ H_m(\sin(z)) = \frac{1}{2m} + \frac{\cos(2(m-1)z)-\cos(2mz)}{2m(1-\cos(2z))}\]
and in particular $H_m$ is an even function.
\item The following inequalities hold for $-1\leq x\leq 1$ (equiv. for $-\pi/2\leq z\leq \pi/2$):
\begin{enumerate}
\item If $m\geq 2$ then $|H_m(\sin(z))|\leq \frac{3}{4}$ whenever $|z|\geq \frac{2}{m}$.
\item If $|z|\leq \frac{2}{m}$ then $|H_m(\sin(z))|\leq 1-\frac{m^2}{3!2}\sin^2(z)$ and therefore \[|H_m(x)|\leq \max\left(3/4, 1-\frac{m^2}{3!2}x^2\right)\text{ for every $x\in [-1,1]$}.\] 
\item For every $x\in [-1,1]$ the following inequality holds
\[H_m(x)\geq 1-\frac{\pi^2}{2}m^2x^2\]

\end{enumerate}
\end{enumerate}
\end{lemma}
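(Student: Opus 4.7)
My plan is to prove the five statements in order; parts (1), (2), (3) and the first two cases of (4) are relatively routine consequences of the cosine-sum form of $H_m(\sin z)$, while part (4b) is the main technical challenge. For (1), each $T_{2k}$ has degree $2k$ with nonzero leading coefficient, so the top-order contribution to $H_m$ comes from $T_{2(m-1)}$ and has degree exactly $2(m-1)$. For (3), I use the identity $T_{2k}(\sin z)=\cos(2k(\pi/2-z))=(-1)^k\cos(2kz)$, valid on $[-\pi/2,\pi/2]$. This converts $H_m(\sin z)$ into $\frac{1}{m}\sum_{k=0}^{m-1}\cos(2kz)$; summing the geometric series $\sum e^{2ikz}$ and taking real parts yields $H_m(\sin z)=\frac{\sin(mz)\cos((m-1)z)}{m\sin z}$, and a product-to-sum expansion rewrites this in the form stated in the lemma. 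Evenness of $H_m$ is immediate from the cosine form, and part (2) follows from the triangle inequality applied to that sum, with equality forcing $\cos(2kz)=1$ for every $k$ and hence $z=0$.

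For (4a), I start from the rewriting $H_m(\sin z)=\frac{1}{2m}+\frac{\sin((2m-1)z)}{2m\sin z}$ of part (3). For $|z|\geq 2/m$ in $[-\pi/2,\pi/2]$, Jordan's inequality gives $|\sin z|\geq 4/(\pi m)$, so $|H_m(\sin z)|\leq \frac{1}{2m}+\frac{\pi}{8}\leq \frac{1}{4}+\frac{\pi}{8}<\frac{3}{4}$ for $m\geq 2$. For (4c), the identity $1-H_m(\sin z)=\frac{2}{m}\sum_{k=0}^{m-1}\sin^2(kz)$ (from $\cos(2\theta)=1-2\sin^2\theta$), together with $\sin^2(kz)\leq (kz)^2$ and Jordan's $z^2\leq \pi^2\sin^2(z)/4$, yields $H_m(\sin z)\geq 1-\pi^2 m^2\sin^2(z)/6$, which is stronger than the stated bound.

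For the hard part (4b), the upper bound $H_m(\sin z)\leq 1-m^2\sin^2(z)/12$ on $|z|\leq 2/m$ reduces, via the closed form of (3), to $n\sin z-\sin(nz)\geq m^3\sin^3(z)/6$ with $n=2m-1$. I plan to expand both sines as power series to get
\[n\sin z-\sin(nz)=\sum_{j\geq 1}(-1)^{j+1}\frac{(n^{2j+1}-n)z^{2j+1}}{(2j+1)!}.\]
For $|z|\leq 2/m$ one has $|nz|\leq 2$; a short computation using $(n^{2j+2}-1)/(n^{2j}-1)\leq n^2+1$ and $(n^2+1)z^2\leq 16$ shows that the absolute values of the terms form a strictly decreasing sequence (e.g.\ the ratio at $j=1$ is at most $4/5$). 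The alternating series test then yields the two-term lower bound $(n^3-n)z^3/6-(n^5-n)z^5/120$, and after bounding $\sin^3(z)\leq z^3$, the task reduces to the polynomial inequality $20(n^3-n)-(n^5-n)z^2\geq 20m^3$, whose worst case is $z=2/m$; substituting $n=2m-1$ reduces it to $4(m-1)(2m-1)(m^2+4m-2)\geq 5m^4$, which direct algebra confirms for every $m\geq 2$.

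For the companion lower bound $H_m(\sin z)\geq -(1-m^2\sin^2(z)/12)$, I use the dual identity $1+H_m(\sin z)=\frac{2}{m}\sum_{k=0}^{m-1}\cos^2(kz)$. For $|z|\leq 2/m$, the set of $k\in\{0,\ldots,m-1\}$ with $|kz|\leq \pi/4$ has size at least $\lfloor\pi m/8\rfloor+1\geq \pi m/8$, and each such term contributes $\cos^2(kz)\geq 1/2$; this gives $\sum\cos^2(kz)\geq \pi m/16$ and hence $1+H_m(\sin z)\geq \pi/8$. Since $\sin^2(z)\leq 4/m^2$ in this range forces $m^2\sin^2(z)/12\leq 1/3<\pi/8$, the lower bound follows. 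The main obstacle throughout is the bookkeeping in (4b), where the alternating-series and polynomial-inequality arguments must be executed carefully to land on the exact constants claimed in the lemma.
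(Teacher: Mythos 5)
Your proof is correct, and while the overall strategy (closed trigonometric form plus elementary sine/cosine estimates) matches the paper, the execution differs in the two places that matter. For part (3) you use the real identity $T_{2k}(\sin z)=(-1)^k\cos(2kz)$ and the Dirichlet-kernel sum to get $H_m(\sin z)=\frac{\sin(mz)\cos((m-1)z)}{m\sin z}=\frac{1}{2m}+\frac{\sin((2m-1)z)}{2m\sin z}$, whereas the paper sums the complex geometric series $\frac{1}{m}\sum_k(-e^{2i\theta})^k$ and takes real parts using $m$ even; the two forms agree via $\cos(2(m-1)z)-\cos(2mz)=2\sin((2m-1)z)\sin z$ and $1-\cos(2z)=2\sin^2 z$, and your derivation is parity-free. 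The substantive divergence is in (4b): the paper controls $|H_m(\sin z)|$ at once by the modulus $\frac{|\sin(mz)|}{m|\sin z|}$ and then runs a short chain of Taylor-quotient inequalities (degree-$5$ upper bound for the numerator, degree-$3$ lower bound for the denominator, using $z^2\le 5(m^2-1)/m^4$), while you reduce to $n\sin z-\sin(nz)\ge m^3\sin^3(z)/6$ with $n=2m-1$ and prove it by a two-term alternating-series truncation plus the quartic inequality $4(m-1)(2m-1)(m^2+4m-2)\ge 5m^4$, i.e.\ $3m^4+20m^3-60m^2+40m-8\ge 0$, which indeed holds for all $m\ge 2$ (I checked the ratio estimate $(n^2+1)z^2\le 16$ and the algebra; both are fine). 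The price of your exact-identity route is that it only yields the upper bound on $H_m$, so you correctly supply the separate lower bound via $1+H_m(\sin z)=\frac{2}{m}\sum_k\cos^2(kz)\ge \pi/8$ together with $m^2\sin^2(z)/12\le 1/3<\pi/8$; the paper gets the two-sided bound for free from the modulus. Your (4a) via Jordan's inequality (giving $\frac{1}{2m}+\frac{\pi}{8}\le\frac{3}{4}$ for $m\ge 2$) and your (4c) via $1-H_m(\sin z)=\frac{2}{m}\sum_k\sin^2(kz)$ are minor variants of the paper's termwise estimates, with (4c) even producing the slightly sharper constant $\pi^2m^2/6$ in place of $\pi^2m^2/2$.
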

\begin{proof} $(1)$ Since $T_k(x)$ is a polynomial of degree $k$ it follows that $H_m(x)$ is a polynomial of degree $2(m-1)$. $(2)$ It is immediate from the trigonometric definition that the Chebyshev polynomials satisfy $|T_{k}(x)|\leq 1$ in $[-1,1]$ and that the maximum value is achieved when $x=\cos(\theta)$ where $\theta$ is a solution of $\cos(k\theta)=1$. It follows that any convex combination of the polynomials $(-1)^kT_{2k}$ takes values in $[-1,1]$ and that the value $1$ is achieved only at the common solutions of $(-1)^kT_{2k}(x)=1$ for $k=0,\dots, m-1$, that is, only when $x=0$. $(3)$ Since $T_m(\cos\theta)$ is the real part of $e^{im\theta}$ we can rewrite $H_m(\cos(\theta))$ as the real part of $Z_m(\theta):=\frac{1}{m}\sum_{k=0}^{m-1} (-e^{i2\theta})^k$. Computing the geometric sum we conclude that 
\[ H_m(\cos(\theta))={\rm Re}(Z_m(\theta))={\rm Re}\left(\frac{1+(-1)^{m-1}e^{i2m\theta}}{m(1+e^{i2\theta})}\right).\]
Computing the real part for $m$ even we conclude that
\[H_m(\cos(\theta)) = \frac{1+\cos(2\theta) -\left(\cos(2m\theta)+\cos(2(m-1)\theta)\right)}{2m(1+\cos(2\theta))}\]
Furthermore, setting $z=\theta-\frac{\pi}{2}$ we obtain the equality
\[H_m(\sin(z))= \frac{1}{2m} + \frac{\cos(2(m-1)z)-\cos(2mz)}{2m(1-\cos(2z))}\]
proving $(3)$. $(4)$ For the inequalities in part $(4)$ we will freely use the fact that the Taylor expansions of order $m$ of $\sin(x)$ and $\cos(x)$ are lower (resp. upper) bounds on them when $m$ is even (resp. odd). For instance, the following inequalities hold
\[ \theta\geq \theta-\frac{\theta^3}{3!} + \frac{\theta^5}{5!} \geq\sin(\theta)\geq \theta-\frac{\theta^3}{3!} + \frac{\theta^5}{5!} -  \frac{\theta^7}{7!}\geq \theta-\frac{\theta^3}{3!}.\]
$(4a)$ Setting $z=\theta-\frac{\pi}{2}$ and using the fact that $|1-e^{i2\theta}|= 2|\sin(\theta)|$ we conclude that the norm of the complex number $Z_m(\theta)$ above is given by
\[\left|Z_m(\theta)\right| = \frac{|\sin(mz)|}{m|\sin(z)|}\]
It follows that if $|z|\geq \frac{2}{m}$ then 
\[|H_m(\sin(z))|\leq \frac{|\sin(mz)|}{m|\sin(z)|}\leq \frac{1}{m\left(\frac{2}{m}-\frac{2^3}{m^33!}\right)}\leq \frac{3}{4}.\]
where the last inequality holds since $m\geq 2$.
$(4b)$ From the Taylor expansion inequalities for cosine above we know that
for $0\leq z\leq mz\leq \pi$ the following inequalities holds
\begin{align*}
\frac{\sin(mz)}{m\sin(z)} \leq \frac{mz-\frac{(mz)^3}{3!}+\frac{(mz)^5}{5!}}{m\left(z-\frac{z^3}{3!}\right)}\leq 1+\frac{1-m^2}{3!}z^2 + \frac{\frac{m^4}{5!}+\frac{1-m^2}{(3!)^2}}{\left(1-\frac{z^2}{3!}\right)}z^4 \leq 1-\frac{m^2}{3!2}z^2\leq 1-\frac{m^2}{3!2}\sin^2(z)
\end{align*}

where the second to last inequality holds because it is equivalent to the inequality $z^2\leq \frac{5(m^2-1)}{m^4}$ which is implied by our assumption that $|z|\leq \frac{2}{m}$ proving $(4b)$. 

For the final inequality $(4c)$ recall that 
\[(-1)^kT_{2k}(\sin(z))=\cos(2kz).\]
It follows that for $k=0,\dots,m-1$ the inequality
\[\cos(2kz)\geq 1-\frac{(2kz)^2}{2}\geq 1-2m^2z^2\]
holds. Using the fact that $z\leq \frac{\pi\sin(z)}{2}$ for $z\in [0,\frac{\pi}{2}]$ we conclude that

\[H_m(x)\geq 1-\frac{\pi^2}{2}m^2x^2\]
for every $x\in [-1,1]$ proving $(4c)$
\end{proof}

\begin{remark} The motivation for defining $H_m(x)$ in the previous Lemma comes from the idea of trying to approximate the Dirac distribution centered at $0$ as a combination of orthogonal functions. Heuristically, if $\delta_0=\sum a_i\phi_i$ then $a_j=\int \phi_j d\delta_0$ by orthogonality of the $\phi_i$  and thus $a_j = \phi_j(0)$ since $\delta_0$ represents the evaluation function at zero. Using easy properties of Chebyshev polynomials these approximations lead to the helper polynomials $H_m(x)$ of the previous Lemma.
\end{remark}

We are now in a position to prove our main result on approximate super-resolution.
 
\begin{proof} [Proof of Theorem~\ref{thm: ApproxMain}] Suppose $I(X)=(f_1,\dots, f_s)$ and let $M_i:=\sup_{x\in K} |f_i|$. Define the polynomial $P_m(z)$ on $K$ as
\[ P_m(z)=\frac{1}{s}\left(H_m\left(\frac{f_1(z)}{M_1}\right)+\dots +H_m\left(\frac{f_s(z)}{M_s}\right)\right).\]
Note that $P$ is a polynomial of degree $2(m-1)g(X)$. We will show that for all sufficiently large even integers $m$ the polynomial $P_m(z)$ is a witness for a quadratic isolation condition on $X$. Crucially the constants $C_a$ and $C_b$ will depend on $m$ allowing us to understand how the approximation quality varies with the degree. The proof proceeds by verifying the following claims:
\begin{enumerate}
\item $\|P_m\|_{\infty}\leq 1$ on $K$. This is because $P_m$ is a convex combination of polynomials $H_m\left(\frac{f_i}{M_i}\right)$ which satisfy the same inequality by Lemma~\ref{lem: sombrero} part $(2)$ as $f_i(z)/M_i$ takes values in $[-1,1]$ on $K$ by definition of $M_i$.
\item $P_m(z)=1$ if and only if $z\in X$. By $(1)$ $P_m(z)$ is equal to one if and only if all the summands $H_m\left(\frac{f_i(z)}{M_i}\right)$ assume the value one which by Lemma~\ref{lem: sombrero} occurs if and only if $f_i(z)=0$ for $i=1,\dots, s$ or equivalently if and only if $z\in X$.

\item 
Now let $\eta, D$ be the positive real numbers given by Lemma~\ref{lem: genpoints} and let $m$ be even and sufficiently large. For $z\in K$ we have one of the following cases

\begin{enumerate}
\item There exists an index $j$ such that $\left|\arcsin(f_j(z)/M_j)\right|\geq \frac{2}{m}$. In this case, Lemma~\ref{lem: sombrero} part $(4a)$ implies that $\left|H_m\left(\frac{f_j(z)}{M_j}\right)\right|\leq 3/4$ and therefore we have
\[P_m(z)\leq   1-\frac{1}{s}+\frac{3}{4s} =1-\frac{1}{4s}\] 
\item For every index $j$ the inequality $\left|\arcsin(f_j(z)/M_j)\right|\leq \frac{2}{m}$ holds. Then by Lemma~\ref{lem: sombrero} part $(4b)$ we have
\[P_m(z)\leq \frac{1}{s}\sum_{t=1}^s \left(1-\frac{m^2}{3!2}\left(\frac{f_t(z)}{M_t}\right)^2\right)\]
By our assumption on $m$ this implies that $z\in N(X,\eta)$ and therefore by Lemma~\ref{lem: genpoints} we conclude that
\[P_m(z)\leq 1-\frac{m^2}{3!2s}D d(X,z)^2.\] 
\end{enumerate}
\end{enumerate}
As a result the inequality
\[P_m(z)\leq \max\left(1-\frac{m^2}{3!2s}Dd(X,z)^2, 1-\frac{1}{4s}\right)\]
holds for every $z\in K$, proving a QIC condition with constants $C_a:=\frac{m^2D}{3!2s}$ and $C_b=\frac{1}{4s}$. The conclusions of part $(1)$ and $(2)$ of the Theorem follow by applying Theorem~\ref{thm: Approx}.

$(3)$ Suppose $x^*\in X$ and define the polynomial
\[G_m(z)=\frac{1}{n}\left(\sum_{j=1}^n H_{m}\left(\frac{z_j-x^*_j}{{\rm diam}(K)}\right)\right)\]
where ${\rm diam}(K)$ is the largest distance between any two points in $K$. Note that $G_m(z)$ is a polynomial of degree $m\leq d$, that $\|G_m\|_{\infty}=1$ on $K$ and that  $G_m$ achieves the maximum value one only when $z=x^*$. Using $G_m$ we can re-write $\left| \mu(x^*)-\sum_{z: d(z,x^*)\leq c_0}\hat\Delta(z)\right|$ as 
\[\left| \left(\mu(x^*)-\int_K G_md\mu\right) + \left(\int_K G_md\mu-\int_KG_md\hat\Delta\right) +\left(\int_KG_md\hat\Delta-\sum_{z: d(z,x^*)\leq c_0}\hat\Delta(z)\right)\right|.\]
the claim will be proven by using the triangle inequality and bounding the absolute values of each of the terms as follows:
\begin{enumerate}
\item $\left| \left(\mu(x^*)-\int_K G_md\mu\right)\right|\leq \frac{1}{m}\|\mu\|_{\rm TV}$. This is because $m$ is sufficiently large so that $|G_m(z)|$ is bounded by $\frac{1}{m}$ at all points of $X$ distinct from $x^*$. 
\item $\left|\int_K G_m(d\mu-d\hat\Delta)\right|\leq 2\delta$. This is a consequence of Lemma~\ref{lem: key} part $(1)$ since $\|G_m\|_{\infty}\leq 1$ on $K$.
\item To bound the term $\left|\int_KG_md\hat\Delta-\sum_{z: d(z,x^*)\leq c_0}\hat\Delta(z)\right|$ we rewrite the left-hand side and use the triangle inequality obtaining an upper bound of 
\[\left|\sum_{z: d(z,x^*)\leq c_0, \Delta(z)>0}(1-G_m(z))\hat\Delta(z)\right| + \left|\sum_{z: d(z,x^*)\geq c_0, \Delta(z)>0 }G_m(z)\hat\Delta(z)\right|+\left|G(z)\sum_{\Delta(z)<0} \hat\Delta(z) \right| \]
The second and third term are bounded above by $2(s+1)\delta$ by what we have proven in part $(2)$ and the fact that $|G(z)|\leq 1$.
For the remaining term note that Lemma~\ref{lem: sombrero} part $(5c)$ implies that the following inequality holds for every $z\in K$,
\[ G_m(z)\geq \frac{1}{n}\sum_{j=1}^n\left(1-\frac{m^2 \frac{\pi^2}{2}(z_j-x^*_j)^2}{{\rm diam}(K)^2} \right)= 1-\frac{m^2 \frac{\pi^2}{2}d(z,x^*)^2}{{\rm diam}(K)^2}.\] 
Since $d(z,x^*)\leq c_0$ we can assume $m$ is sufficiently large so that $d(z,x^*)=d(z,X)$ and conclude that the term $\left|\sum_{z: d(z,x^*)\leq c_0, \Delta(z)>0}(1-G_m(z))\hat\Delta(z)\right|$ is bounded above by
\[\sum_{z: d(z,x^*)\leq c_0, \Delta(z)>0} \frac{m^2 \frac{\pi^2}{2}}{{\rm diam}(K)^2} d(z,X)^2\hat{\Delta}(z)\leq \left(\frac{3!4s \frac{\pi^2}{2}}{{\rm diam}(K)^2D}\right)\delta.\]
where the last inequality follows from what we have proven in part $(2)$.
\end{enumerate}
Combining the above inequalities we conclude that
\[\left| \mu(x^*)-\sum_{z: d(z,x^*)\leq c_0}\hat\Delta(z)\right|\leq \frac{1}{m}\|\mu\|_{\rm TV} + \left(2(s+1)+ \frac{3!4s \frac{\pi^2}{2}}{{\rm diam}(K)^2D}\right)\delta\]
as claimed.
\end{proof}

\begin{remark} The previous Theorem is better expressed in words: parts $(1)$ and $(2)$ prove that if there are large recovered spikes then these must lie near true spikes. Part $(3)$ shows the complementary statement that there {\it must} exist recovered spikes close to the true spikes.
\end{remark}

\begin{remark}
In the setting of one-dimensional super-resolution there are stronger quantitative guarantees than~\cite{AzaisDeCastroGamboa} under additional assumptions (see for instance~\cite{FG3,DP}). It is an interesting question to ask whether those guarantees can be extended to the higher-dimensional setting we consider in this paper.
\end{remark}

The following example shows that the constants in the previous proofs can be computed if one has a sufficiently precise understanding of the geometry of the support set $X$.
\begin{example}\label{Exo: Grids} (Measures supported on a finite grid). Suppose $A_1,\dots, A_n\subseteq [0,1]\subseteq  \RR$ are finite sets of size $s>1$. Let $X=A_1\times A_2\times \dots \times A_n\subseteq K:=[0,1]^n$. In this example we will estimate the constant $D$ appearing in Lemma~\ref{lem: genpoints}. Let $d_i$ be the minimum distance among points with distinct $i$-th coordinate projection, that is $d_i:=\min_{a\neq b \in A_i\cup \{0,1\}} |a-b|$ and let $d_{\min} = \min_{i=1,\dots, n} d_i$. The ideal $I(X)$ is defined by the polynomials $p_i(x):=\prod_{a\in S_j}(x_i-a)$ for $i=1,\dots,n$. Suppose $A_i\cup \{0,1\}$ is given by $0=\alpha_0^i<\alpha_1^i<\dots <\alpha_{s}^i<\alpha_{s+1}^i=1$. For $j=0,\dots, s$ define $\beta_{j}^i=(\alpha_j^i+\alpha_{j+1}^i)/2$ and note that, whenever $x_i\in [\beta_{j-1}^i,\beta_{j}^i]$ the following inequality holds

\[p_i^2=(x_i-\alpha_j^i)^2\left(\prod_{t\neq j}(x_i-\alpha_t^i)^2\right)\geq (x_i-\alpha_j^i)^2 \left(\frac{d_i}{2}\right)^{2(s-1)}\]

For $i=1,\dots, n$ let $M_i=\max_{x\in [0,1]^n} p_i(x)$, let $M=\max M_i$ and note that $M\leq 1$. For any set of choices $j(i)\in \{1,\dots, s\}$ and all points in the box $\prod_{i=1}^n [\beta_{j(i)-1}^i,\beta_{j(i)}^i]\subseteq K$ the following inequalities hold:

\[h(x)=\sum_{i=1}^n \left(\frac{p_i}{M_i}\right)^2 \geq \frac{1}{M^2}\sum_{j=1}^n (x_j-\alpha_{j(i)}^i)^2\left(\frac{d_i}{2}\right)^{2(s-1)}\geq \left(\frac{d_{\min}}{2}\right)^{2(s-1)} d(x,T)^2\]

Since the rightmost inequality is independent of $i$ we can therefore choose $D:=\left(\frac{d_{\min}}{2}\right)^{2(s-1)}$ in Lemma~\ref{lem: genpoints} and obtain a completely explicit quantitative recovery guarantee from Theorem~\ref{thm: ApproxMain} for measures supported on grids.
\end{example}
Specializing the previous example to the one dimensional case, Theorem~\ref{thm: ApproxMain} parts $(2)$ and $(3)$ implies that $O(m)$ measurements perturbed by noise of magnitude $\delta$ lead to a recovered measure $\hat{\Delta}$ such that, if $\beta=d_{\rm min}$ and $c_0=O\left(\frac{1}{m\beta^{s-1}}\right)$ then:
\begin{enumerate}
\item The recovered spikes far from the true spikes are small: $\sum_{z\in F(X,z_0), \hat{\Delta}(z)>0} \hat{\Delta}(z)= O(2s\delta)$ and
\item The recovered spikes near the true spikes are big, that is for every $x^*\in X$
\[\left|\mu(x^*)-\sum_{z: d(z,x^*)\leq c_0} \hat{\Delta}(z) \right|= \frac{1}{m}\|\mu\|_{\rm TV} + O\left(\frac{s\delta}{\beta^{(2s-2)}}\right)\].
\end{enumerate}

There is a significant amount of work in trying to understand the accuracy of one-dimensional super resolution, especially in the Fourier case (In approximate super-resolution the choice of basis for the space $V$ is very important because the information we are given on the magnitude of the error depends on this choice). It is a very interesting direction for further research to determine which, if any, of the following results can be extended to variations of the higher-dimensional setting considered in this article:

\begin{enumerate}
\item Assuming a probabilistic model for the error the right-hand side of part $(2)$ above can be improved to $O(\delta)$ as in~\cite[Theorem 2.2]{AzaisDeCastroGamboa}. Extending this result would require developing a higher-dimensional version of the Bernstein isolation property~\cite[Definition $2.3$]{AzaisDeCastroGamboa}.

\item If $\mu=\sum_{i=1}^N c_i\delta_{x_i}$ is a point measure supported on a subset of size $k$ (i.e. with only $k$ nonzero coefficients $c_i$) of an equally spaced {\it fixed grid} of size $N$ in $[-1,1]$ then the problem of recovering $\mu$ becomes an instance of compressive sensing with respect to the measurement matrix $A\in \RR^{N\times T}$ which maps the vector of coefficients $c\in \RR^N$ to the moments of $\mu$ with respect to the basis functions $\phi_i$.

In this setting, the key quantity of interest is the minimax error, defined as the worst-case error of the best recovery algorithm, namely:
\[E(k,\delta):=\inf_{\hat{c}} \sup_{\text{k-sparse c}}\sup_{e: \|e\|_2\leq \delta} \|\hat{c}-c\|_2\]
where $\hat{c}$ is any algorithm to recover the true coefficients $c$ of the measure from the noise-corrupted vector of moments $f=Ac+e$. It is known~\cite[Theorem 1]{DemanetNguyen} that this quantity is controlled by the number $\epsilon_{2k}:=\min_{T:|T|=2k}\sigma_{\min}(A_T)$, that is by the smallest singular value of a submatrix consisting of $2k$ columns of $A$, in the sense that
\[\frac{1}{2\epsilon_{2k}}\delta\leq E(k,\delta)\leq \frac{2}{\epsilon_{2k}}\delta\]
There are several results about the limits of superresolution in this setting. It is known~\cite{DonohoGrid, DemanetNguyen, LiLiao} that the best possible error rate for one-dimensional super resolution in the Fourier basis is $O\left(\frac{1}{\sqrt{m}(m\beta)^{2(k-1)}}\delta\right)$ where $\beta$ is the minimum distance between distinct grid points. More precise estimates are available when further geometric assumptions are made on the distribution of the support points on the grid~\cite{LiLiao, Batenkov, Batenkov2, LiLiao2}.
Extending these sharp results to the higher-dimensional polynomial setting would first require a natural choice of basis (since the quantities $\epsilon_{2k}$ are obviously basis dependent). We believe it is interesting to study the behavior of higher-dimensional polynomial superresolution on a basis given by a random sample of Kostlan-Shub-Smale polynomials as in~\cite{GHJV0}.

\end{enumerate}

\begin{remark}
Note that none of the cited results are directly comparable to ours since they use very different assumptions, either a fixed probabilistic model for the noise or a fixed basis or the assumption that our unknown measures are supported on $k$-sparse subsets of a fixed finite grid. We prefer not make these assumptions since they are not adequate for our current applications as described in the Introduction and in Section~\ref{Sec: numerics}.
\end{remark}

\subsection{An algorithm for approximate super-resolution}

In this section we focus on solving problem~(\ref{prob: BLASSO}). We begin by proving Theorem~\ref{thm: dual} which reformulates~(\ref{prob: BLASSO}) as a finite-dimensional convex optimization problem amenable to computation whenever~(\ref{prob: BLASSO}) has a discrete minimizer.

\begin{proof}[Proof of Theorem~\ref{thm: dual}] During the proof we will identify problem~(\ref{prob: BLASSO2}) with the dual of~(\ref{prob: BLASSO}) and prove that there is no duality gap. To do this we first reformulate~(\ref{prob: BLASSO}) as a primal problem in standard form (as in~\cite[Section 7.1]{BarvinokConvex}). Recall that a signed Radon measure $\nu$ admits a unique Hahn decomposition as a difference of Radon measures $\nu_+$ and $\nu_-$ and that in this decomposition the total variation is given by $\|\nu\|_{\rm TV}=\nu_-(K)+\nu_{+}(K)$ which is a linear function in $\nu_+$,$\nu_-$. The ambient vector space of our primal optimization problem will be $E=C(K)^*\times C(K)^*\times \RR^{m}\times \RR$ endowed with the weak $\ast$-topology. We will denote its elements by $4$-tuples $(\nu_{-},\nu_{+},\vec{z},w)$. Define the convex cone
\[D:=\left\{(\nu_{-},\nu_{+},\vec{z},w):\|\vec{z}\|_2\leq w\text{ and $\nu_+,\nu_-\in R(K)_+$}\right\}\]
where $R(K)_+$ denotes the cone of positive radon measures on $K$. The continuous dual of $E$, denoted $E^*$ is given by $E^*:=C(K)\times C(K)\times \RR^{m}\times \RR$ and we will write its elements as $4$-tuples $(f_1,f_2,\vec{a},b)$. In this notation the dual cone $D^*\subseteq E^*$ is given by:
\[D^*:=\left\{(f_1,f_2,\vec{a},b): \|\vec{a}\|_2\leq b\text{ and $f_1,f_2\geq 0$ on $K$} \right\}.\] 
To simplify the notation we will write $\int_K fd\nu:=\langle f,\nu\rangle$.
Define the continuous linear map $A: E\rightarrow \mathbb{R}^m\times \RR$ by the formula 
\[A(\nu_{-},\nu_{+},\vec{z},w) = \left(\left(\langle\phi_i,\nu_+-\nu_-\rangle-z_i\right)_{i=1,\dots, m},w\right)\]
and note that problem~(\ref{prob: BLASSO}) is equivalent to
\[ \min_{(\nu_{-},\nu_{+},\vec{z},w)\in D} \langle 1,\nu_++\nu_{-}\rangle\text{ s.t. $A(\nu_{-},\nu_{+},\vec{z},w)= \left(\vec{y},\delta\right)$}\]
its dual problem is therefore given by (see~\cite[Section 7.1]{BarvinokConvex}
\[ \sup_{(f_1,f_2,\vec{a},b)} \langle \vec{a},y\rangle +\delta b\text{ s.t. $(1,1,0,0)-A^*(\vec{a},b)\in D^*$} .\]
By definition of adjoint we have $A^*(\vec{a},b)=\left(\langle \vec{a},\vec{\phi}\rangle,-\langle \vec{a},\vec{\phi}\rangle, -\vec{a},b\right)$ so the dual is equivalent to~(\ref{prob: BLASSO2}) after the change of variables $b\rightarrow -b$. To prove the Theorem we will show that there is no duality gap. Since the objective function is nonnegative and the domain of the problem is nonempty (because its feasible set contains the measure $\mu$ which we would like to recover) by~\cite[Theorem 7.1]{BarvinokConvex} it suffices to prove that $\hat{A}(D)\subseteq \RR^{m+2}$ is closed where $\hat{A}: E\rightarrow \RR\times \RR^{m}\times \RR$ is given by 
\[\hat{A}(\nu_{-},\nu_{+},\vec{z},w)=\left( \langle 1,\nu_++\nu_-\rangle, A(\nu_{-},\nu_{+},\vec{z},w)\right).\]
Assume $\{\beta_j\}_j$ with $\beta_j=(\nu_{-}^j,\nu_{+}^j,\vec{z}^j,w^j)$ is a sequence in $D$ for which $\hat{A}(\beta_j)$ converges to $s\in \RR^{m+2}$ as $j\rightarrow \infty$. We will show that there exists $\beta\in D$ such that $\hat{A}(\beta)=s$. Since $(\hat{A}(\beta_j))_j$ is a convergent sequence in $\RR^{m+2}$ it is bounded and therefore both the total variation of the $\nu_{\pm}^j$ and the $w^j$ which are the first and last components of the map $\hat{A}$ are bounded. By the Theorem of Banach-Alaoglu we know that balls in $C(K)^*$ are compact in the weak $\ast$ topology and therefore conclude that the points $\beta_j$ lie in a compact subset of the closed cone $D\subseteq E$. As a result there is a subsequence $(\beta_{j_n})_n$ converging to a point $\beta\in D$. Since $\hat{A}$ is a continuous linear map we conclude that $\hat{A}(\beta)=s$ as claimed.

\end{proof}

\begin{remark}\label{rem: explicit} If we think of a signed measure as a linear operator $L\in V^*$ in the ellipsoid $\mathcal{E}$ defined by $\left\|(L(\phi_i)-y_i)_{i=1\dots m}\right\|_2\leq \delta$ then the quantity $-\|\vec{a}\|_2\delta+ \sum_{i=1}^m a_iy_i$ equals $\inf_{L\in \mathcal{E}} L(P)$ where $P:=\sum a_i\phi_i$ and the optimization problem above can be thought of as solving
\[ \sup_{P: \|P\|_{\infty}\leq 1, P\in V}\left( \inf_{L\in \mathcal{E}} L(P)\right)\] 
This suggests a methodology for recovering an optimizer measure, given an optimal solution $(\vec{a}^*,\alpha^*)$ of~(\ref{prob: BLASSO2}), namely:
\begin{enumerate}
\item Define $P^*:=\sum a_i^*\phi_i$ and find an operator $L^*$ which is a minimizer of the second-order cone optimization problem $\inf_{L\in \mathcal{E}} L(P^*)$.
\item The values $L^*(\phi_i)$ are the moments of a measure which we can try to recover via exact superresolution as in the previous section. The moments of this measure are contained in $\mathcal{E}$ and $L^*(P^*)=\alpha^*$ so the measure has total variation $\alpha^*$ and is therefore a minimizer of~(\ref{prob: BLASSO}). 
\end{enumerate}
\end{remark}

Next we prove Theorem~\ref{thm: hierarchy} which gives a semidefinite programming hierarchy for solving~(\ref{prob: BLASSO2}) on explicitly bounded semialgebraic sets.

\begin{proof}[Proof of Theorem~\ref{thm: hierarchy}] $(1)$ A polynomial $h$ is a sum-of-squares of polynomials of degree at most $e$ iff there exists a PSD matrix $A$ such that $h=\vec{m}^tA\vec{m}$ where $\vec{m}$ is the vector of monomials of degree at most $e$. It follows that $Q_s(g)$ is a Semidefinitely Representable (SDR) set (i.e. a linear projection of a spectrahedron) for any $s>0$. We conclude that the set 
\[\left\{(\vec{a},b): 1-\langle \vec{a},\vec{\phi}\rangle, 1+\langle \vec{a},\vec{\phi}\rangle \in Q_s(g), \|\vec{a}\|_2\leq b\right\}\]
is also SDR since it is an intersection of two affine slices of SDR sets and a second-order cone constraint. Since the function $\langle\vec{a},y\rangle -b\delta$ is linear on $(\vec{a}, b)$ we conclude that $\alpha_s$ is the optimal value of a semidefinite programming problem as claimed.

$(2)$ Suppose that $(\vec{a}^*,b^*)$ is an optimal solution of~(\ref{prob: BLASSO2}). For $\epsilon>0$  let $\vec{a}':=(1-\epsilon)\vec{a}^*$ and $b':=(1-\epsilon)b$. It is immediate that $1-\langle \vec{a}',\vec{\phi}\rangle >0$ and $1+\langle \vec{a}',\vec{\phi}\rangle >0$. Since $K$ is explicitly bounded Putinar's Theorem~\cite{Putinar} implies that there exists an integer $e>0$ such that $1-\langle \vec{a}',\vec{\phi}\rangle,1+\langle \vec{a}',\vec{\phi}\rangle\in Q_{e}(g)$ and therefore $\alpha_s$ is at least the optimal value at $(\vec{a}',b)$, that is $(1-\epsilon)\alpha$. We conclude that $(1-\epsilon)\alpha \leq\alpha_e\leq \alpha$ proving the claim since $\epsilon>0$ was arbitrary.

\end{proof}

We are now in a position to prove the summarization Theorem~\ref{thm: summary}.

\begin{proof}[Proof of Theorem~\ref{thm: summary}] Suppose $\Delta$ is a $(\delta,k)$ summary of $\mu$ and let $X:={\rm supp}(\Delta)$ Since the moments depend continuously on the location of the points we can assume, by slightly perturbing the support of $\Delta$, if necessary, that $X$ is a generic set of points.  If we define $y_i:=\int_K\phi_i d\mu$ then $y_i=\int_K\phi_id\Delta+\epsilon_i$ with $\|(\epsilon_i)_i\|_2\leq \delta$. Since $d\geq \max(2g(X),i(X))$ the claim follows from Theorem~\ref{thm: Approx}.
\end{proof}

\section{Numerical Experiments}
\label{Sec: numerics}

\subsection{Exact Recovery}
In this section we use the SDP procedure outlined in~Section~\ref{Sec: moments} to recover discrete measures in $K:=[-1,1]^n$, for $n = 1,2,4$ with $V=V_{\leq d}$. The goal is to record the behavior of the algorithm as $d$ and $k$ vary for measures supported on generic points. For each pair $(k,d)$ we generate $100$ uniform discrete measures $\Delta_j = \sum_{i=1}^k \frac{1}{k}\delta_{x_i^j}$ with support $S_j :=\{x_{1}^j,...,x_{k}^j\}$ in $[-1,1]^n$ chosen uniformly at random. For each $j$ we compute the moments with respect to the standard monomial basis of $V_{\leq d}$. To quantify the quality of the recovery we evaluate the function $q := H^*$ at the points $x_{i}^j$ and report the proportion of points where this quantity is very close to zero. Figure~\ref{exact_heat} reports the average of these proportions over the $100$ simulations. Figure~\ref{dim1} shows the function $H^*$ for degrees $d=2,3,4$ where $\Delta$ is a counting measure supported at four points in $[-1,1]$. Figure~\ref{dim2} shows the heatmap of the function $\log(H^*)$, for  degrees $d = 1,2,4,6$ where $\Delta$ is a counting measure supported in four points on $K=[0,1]\times [0,1]$. As expected, location accuracy increases with degree.

\begin{center}
\begin{figure}[H]
    \centering
    \begin{subfigure}[b]{0.4\textwidth}
        \includegraphics[width=\textwidth]{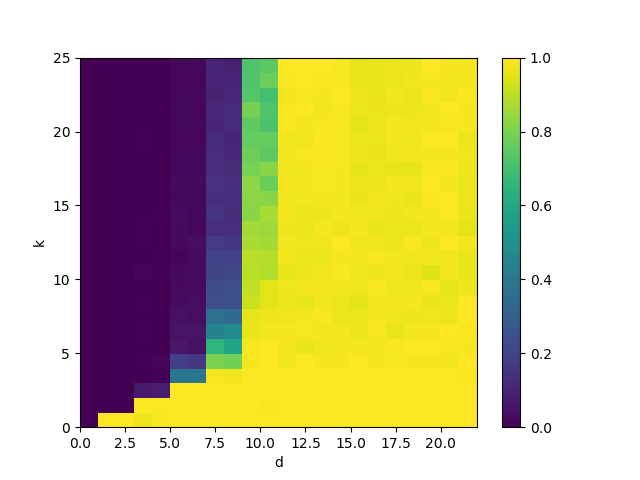}
        \caption{Recovery in dimension 1}
    \end{subfigure}
    ~ 
    \begin{subfigure}[b]{0.4\textwidth}
        \includegraphics[width=\textwidth]{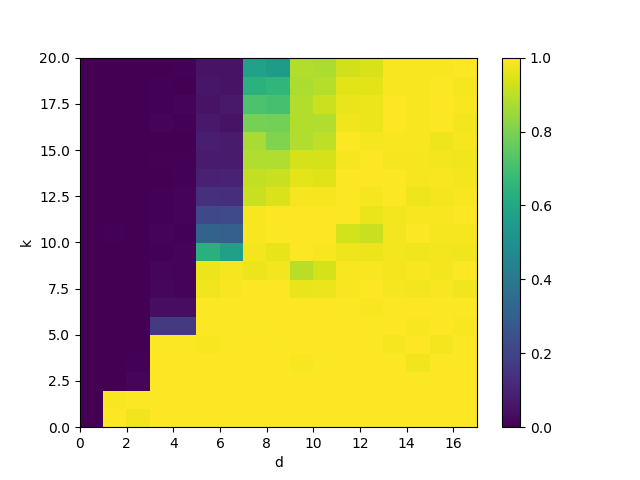}
        \caption{Recovery in dimension 2}
    \end{subfigure}\\
    ~ 
    \begin{subfigure}[b]{0.4\textwidth}
        \includegraphics[width=\textwidth]{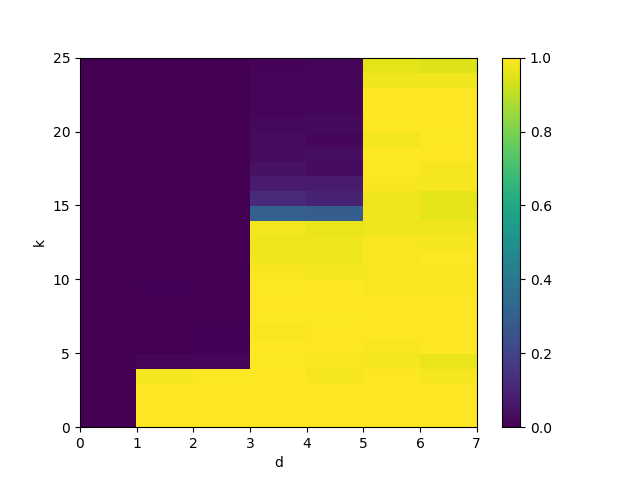}
        \caption{Recovery in dimension 4}
    \end{subfigure}
    \caption{Average revery for different dimensions and $100$ simulations}\label{exact_heat}
\end{figure}
\end{center}

\begin{center}
\begin{figure}[H]
    \centering
    \begin{subfigure}[b]{0.4\textwidth}
        \includegraphics[width=1\textwidth]{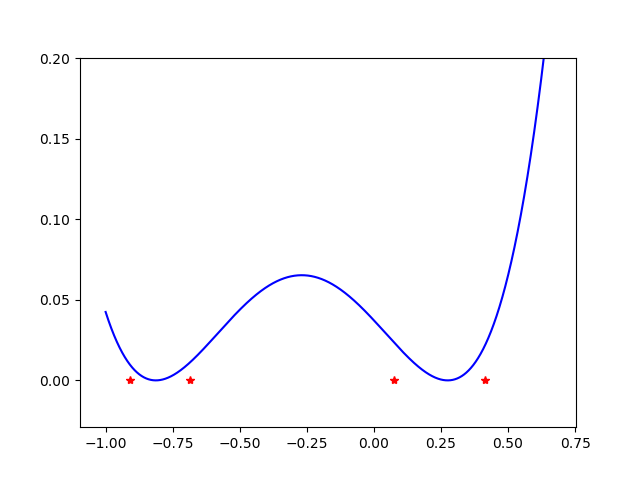}
        \caption{d=2}
        \label{d=2_dim1}
    \end{subfigure}
    \begin{subfigure}[b]{0.4\textwidth}
        \includegraphics[width=\textwidth]{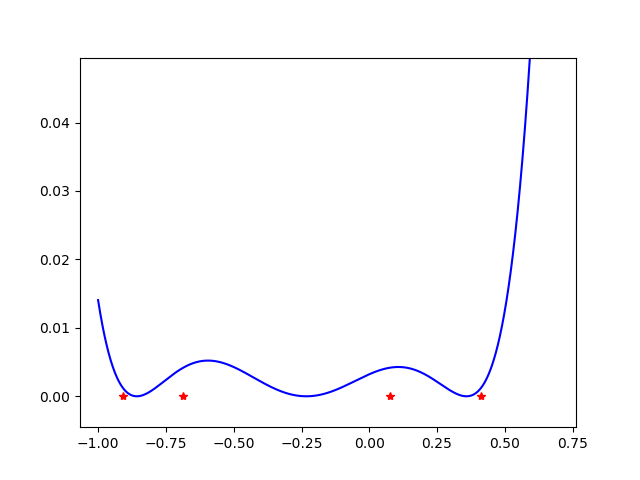}
        \caption{d=3}
        \label{d=3_dim1}
    \end{subfigure}
    ~ 
    \begin{subfigure}[b]{0.4\textwidth}
        \includegraphics[width=\textwidth]{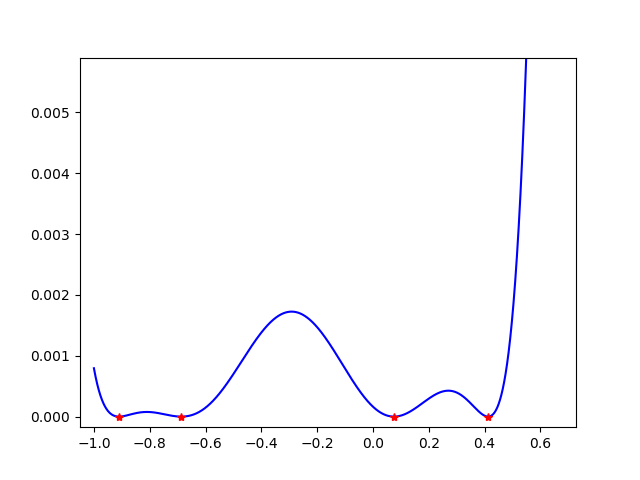}
        \caption{d=4}
        \label{d=4_dim1}
    \end{subfigure}
    \caption{Polynomial $H^*$ associated to the counting measure on $4$ points for different values of degree $d$ via the recovery procedure.}\label{dim1}
\end{figure}
\end{center}

\begin{center}
\begin{figure}[H]
    \centering
    \begin{subfigure}[b]{0.45\textwidth}
        \includegraphics[width=\textwidth]{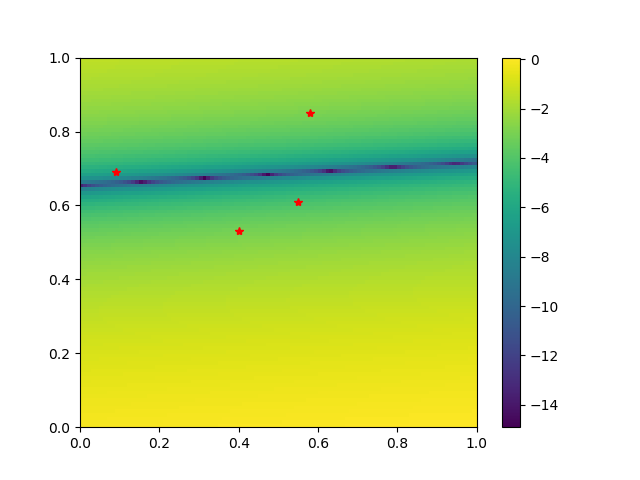}
        \caption{d=1}
        \label{d=1_dim2}
    \end{subfigure}
    ~ 
	    \begin{subfigure}[b]{0.45\textwidth}
        \includegraphics[width=\textwidth]{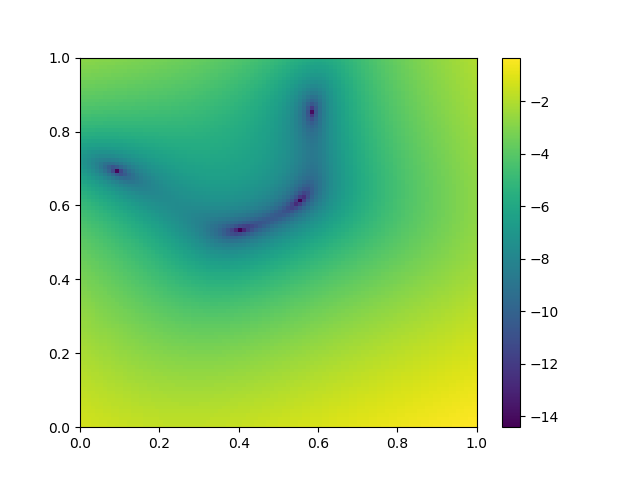}
        \caption{d=2}
        \label{d=2_dim2}
    \end{subfigure}      
\end{figure}    
\begin{figure}[H]\ContinuedFloat      
    \begin{subfigure}[b]{0.45\textwidth}
        \includegraphics[width=\textwidth]{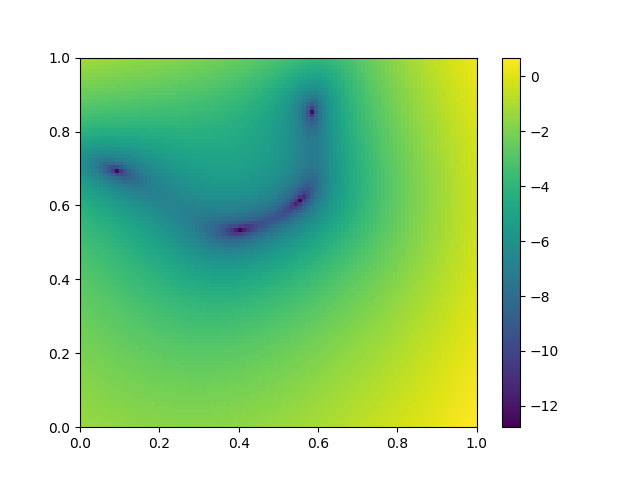}
        \caption{d=4}
        \label{d=4_dim2}
    \end{subfigure}
    ~ 
    \begin{subfigure}[b]{0.45\textwidth}
        \includegraphics[width=\textwidth]{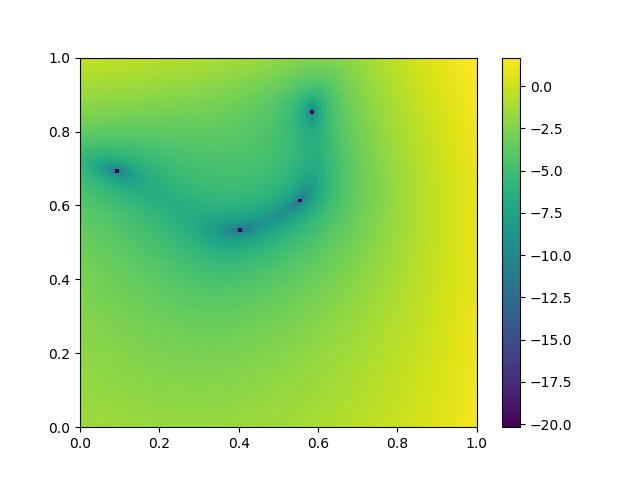}
        \caption{d=6}
        \label{d=6_dim2}
    \end{subfigure}
    \caption{Logarithm of $H^*$ when $\mu$ is a counting measure supported in four points and different values of the degree $d$.}\label{dim2}
\end{figure}
\end{center}

\subsection{Approximate Recovery}

We let $\mu$ be the counting measure supported on the five red points of Figures~\ref{dim1_approx} and ~\ref{dim2_approx} (in dimensions one and two respectively). Noisy measurements $y'_j= \int\Phi _j d\mu +\epsilon_i$ are generated, where $\epsilon_i$ is a sample with distribution $\mathcal{N}(0,\epsilon)$ and $\{\Phi_1,...,\Phi_m\}$ is the ortonormalization of the monomial basis of $V_{\leq d}$ with respect to the inner product given by the Lebesgue measure in $[-1,1]$ and $[0,1]^2$ for $d = 11$ and $d = 6$, respectively in dimension $1$ and $2$. We choose $\delta = \|(\epsilon_i)_i\|_2$ and use the hierarchy defined in~\ref{thm: hierarchy} with $e=d$.

\begin{center}
\begin{figure}[H]
    \centering
    \begin{subfigure}[b]{0.45\textwidth}
        \includegraphics[width=\textwidth]{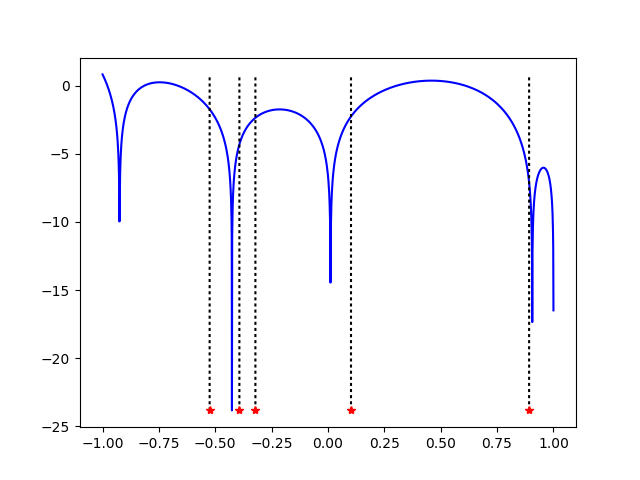}
        \caption{$\epsilon = 10^{-1}$}
        \label{eps_e-1}
    \end{subfigure}
 ~   
    \begin{subfigure}[b]{0.45\textwidth}
        \includegraphics[width=\textwidth]{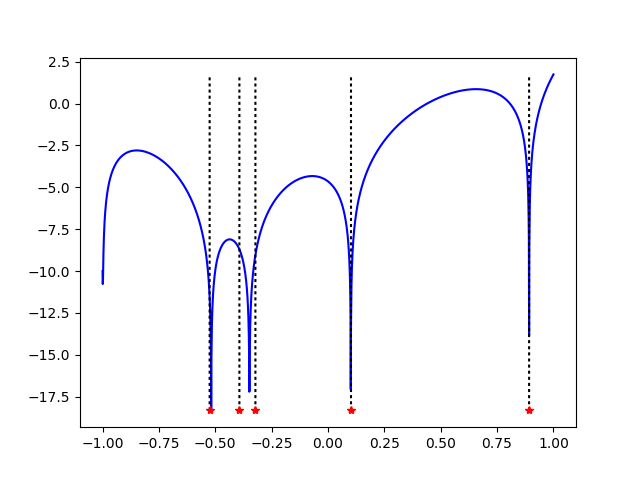}
        \caption{$\epsilon = 10^{-3}$}
        \label{eps_e-3}
    \end{subfigure}
\end{figure}
\begin{figure}[H]\ContinuedFloat
\centering
    \begin{subfigure}[b]{0.45\textwidth}
        \includegraphics[width=\textwidth]{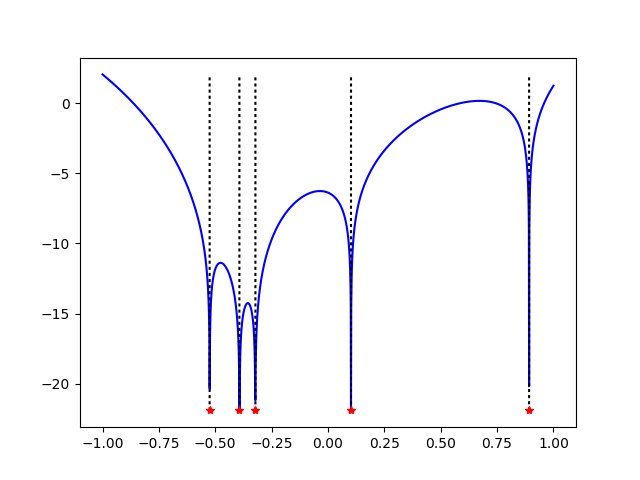}
        \caption{$\epsilon = 10^{-5}$}
        \label{eps_e-5}
    \end{subfigure}
    \caption{Logarithms of optimal polynomials $H^*$ for $d=11$, noisy measurements and varying $\epsilon$.}\label{dim1_approx}
\end{figure}
\end{center}

\begin{center}
\begin{figure}[H]
    \centering
    \begin{subfigure}[b]{0.45\textwidth}
        \includegraphics[width=\textwidth]{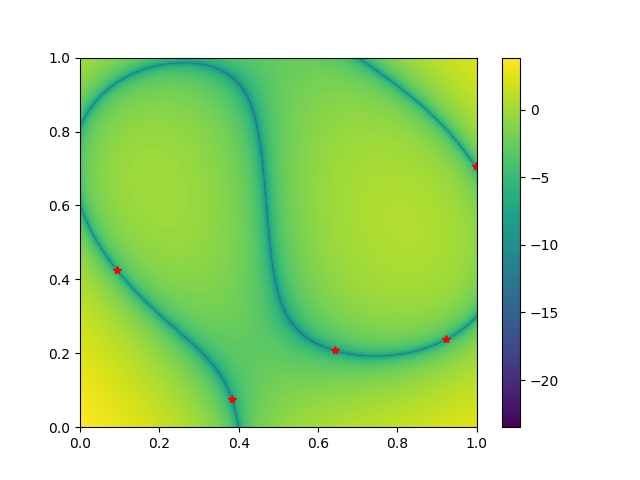}
        \caption{$\epsilon = 10^{-3}$}
        \label{eps_e-3_dim2}
    \end{subfigure}
   ~
    \begin{subfigure}[b]{0.45\textwidth}
        \includegraphics[width=\textwidth]{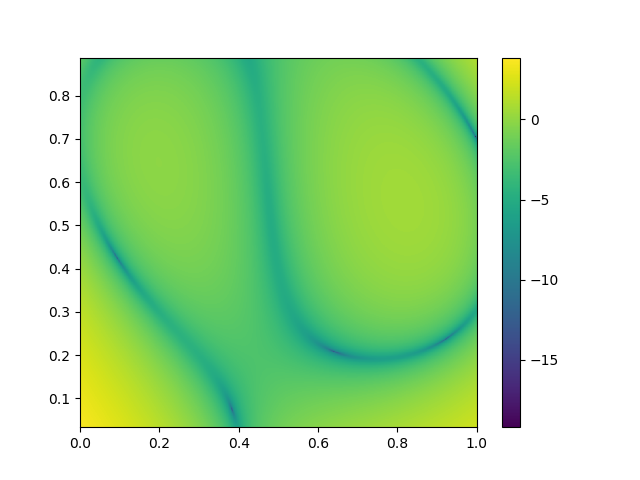}
        \caption{$\epsilon = 10^{-5}$}
        \label{eps_e-5_dim2}
    \end{subfigure}	
    ~
    \begin{subfigure}[b]{0.45\textwidth}
        \includegraphics[width=\textwidth]{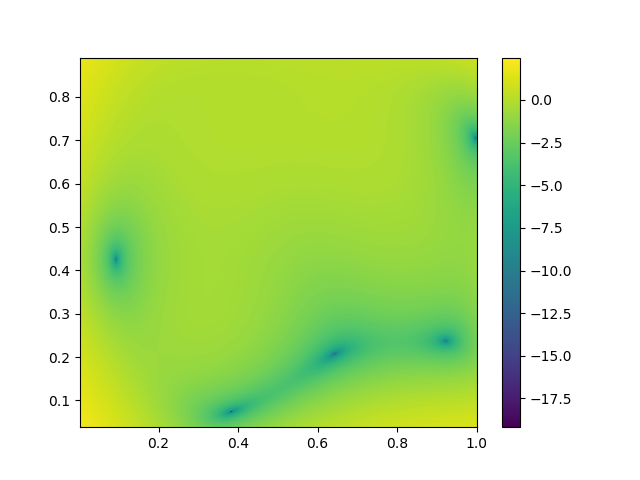}
        \caption{$\epsilon = 10^{-7}$}
        \label{eps_e-7_dim2}
    \end{subfigure}
    \caption{Logarithms of optimal polynomials $H^*$ for $d=6$, noisy measurements and varying $\epsilon$.}\label{dim2_approx}
\end{figure}
\end{center}

\subsection{Measure summarization}

Applying Theorem~\ref{thm: summary} to the Lebesgue measure on the interval $[-1,1]$ with $V=V_{\leq d}$, we obtain a very good approximation of the $d-th$ Gauss-Legendre nodes as local minima of the optimal polynomial $H^*$. This is illustrated in Figure~\ref{dim1_summary_leg}. The vertical lines correspond to the location of the Gauss-Legendre nodes.

\begin{figure}[H]
    \centering
    \begin{subfigure}[b]{0.45\textwidth}
        \includegraphics[width=\textwidth]{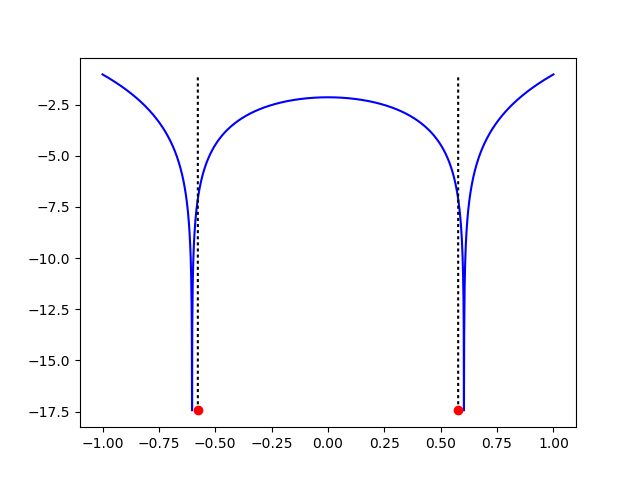}
        \caption{$d = 2$}
        
    \end{subfigure}
 	~    
    \begin{subfigure}[b]{0.45\textwidth}
        \includegraphics[width=\textwidth]{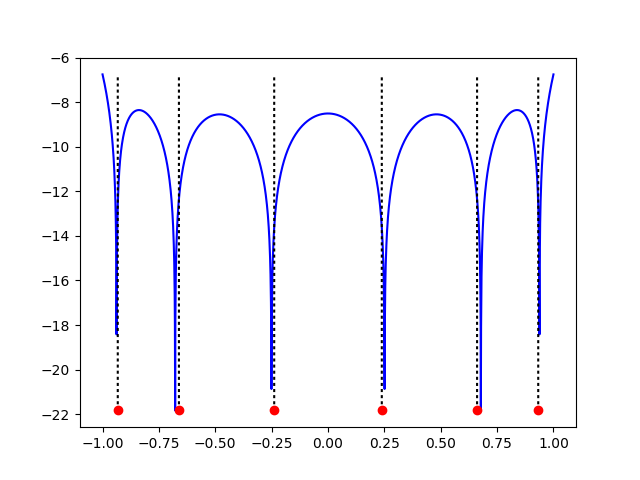}
        \caption{$d = 6$}
        
    \end{subfigure}\\
	~
    \begin{subfigure}[b]{0.45\textwidth}
        \includegraphics[width=\textwidth]{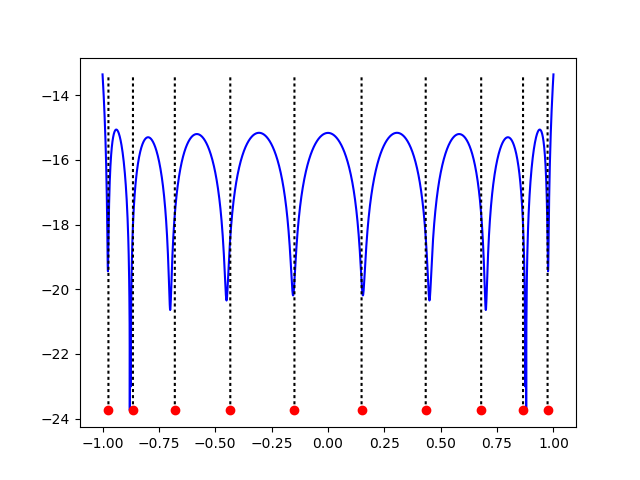}
        \caption{$d = 10$}
        
    \end{subfigure}
    \caption{Summarization of the uniform measure in $[-1,1]$}\label{dim1_summary_leg}
\end{figure}

Similarly we use Theorem~\ref{thm: summary} to obtain discrete approximations to the measures in $[0,1]$ given by the densities $w_1(x):=\sqrt{1-x^2}$ and $w_2(x):= \frac{1}{\sqrt{1-x^2}}$. The results are shown in Figure~\ref{dim1_summary}. The recovered measures turn out to be supported on a set very close to the roots of Chebyshev polynomials (marked in red) of degree $d$, which are known to lead to the best interpolation formulas~\cite[Section 6.1]{kincaid}. 

\begin{figure}[H]
    \centering
    \begin{subfigure}[b]{0.4\textwidth}
        \includegraphics[width=\textwidth]{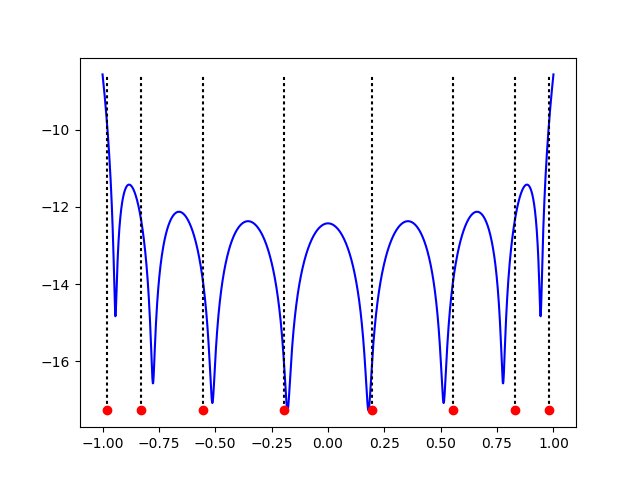}
        \caption{Density $w_1(x)$}
   
    \end{subfigure}
 	~    
    \begin{subfigure}[b]{0.4\textwidth}
        \includegraphics[width=\textwidth]{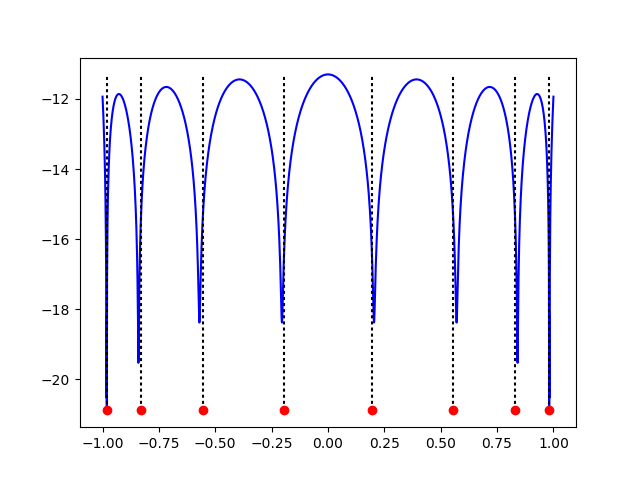}
        \caption{Density $w_2(x)$}
       
    \end{subfigure}
	
    \caption{}\label{dim1_summary}
\end{figure}

Finally, in Figure~\ref{dim2_summaries} we apply Theorem~\ref{thm: summary} to the Lebesgue measure over the square $[-1,1]^2$ with $V=V_{\leq d}$ for $d =3,4,5$. Note that when  $d=3,5$ the obtained summary is not the product measure of the one-dimensional summaries since its support contains $(0,0)$ (compare with Figure~\ref{dim1_summary_leg}). When $d=4$ the algorithm finds an $H^*$ with infinitely many real zeroes and is therefore unable to locate the support of a discrete summary. It would be interesting to find criteria which guarantee that problem~(\ref{prob: Summary}) has discrete minimizers (see ~\cite{BL} for some results on this problem for Fourier moments of complex radon measures in the torus).

\begin{figure}[H]
    \centering
    \begin{subfigure}[b]{0.4\textwidth}
        \includegraphics[width=\textwidth]{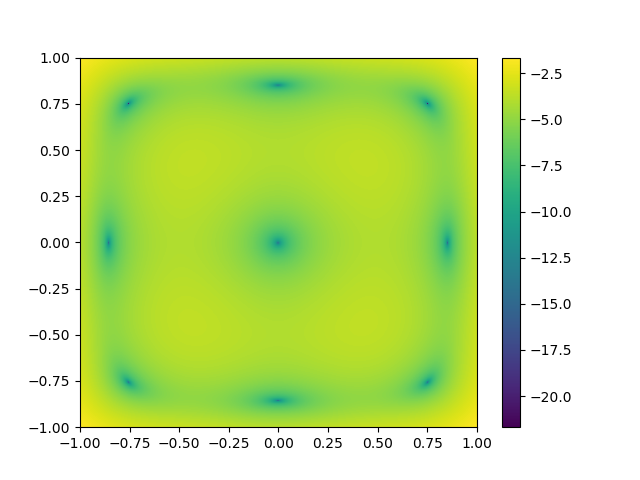}
        \caption{Nodes for $d = 3$.}
   
    \end{subfigure}
 	~    
    \begin{subfigure}[b]{0.4\textwidth}
        \includegraphics[width=\textwidth]{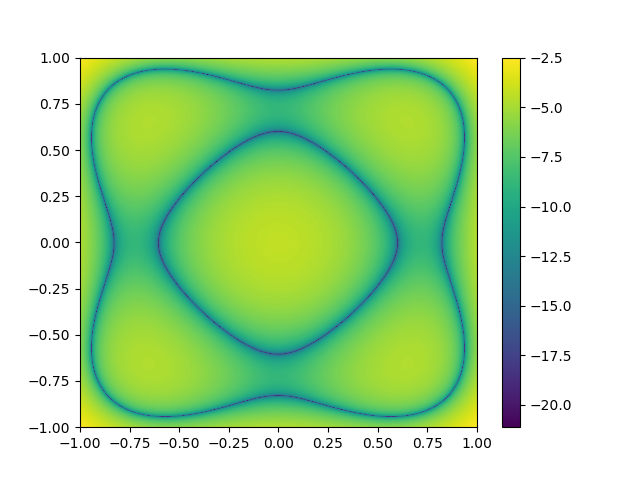}
        \caption{Nodes for $d = 4$.}
       
    \end{subfigure}\\
 	~    
    \begin{subfigure}[b]{0.4\textwidth}
        \includegraphics[width=\textwidth]{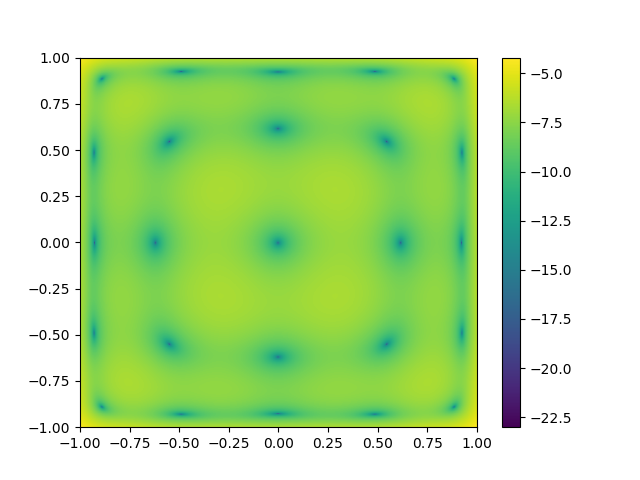}
        \caption{Nodes for $d = 5$.}
       
    \end{subfigure}
	
    \caption{}\label{dim2_summaries}
\end{figure}

All computations in this section were made with the Julia programming language~\cite{Julia} using the specialized solver~\cite{Mosek} and the JuMP modeling language~\cite{Jump}. The code used to generate the examples in this section is freely available at \url{https://github.com/hernan1992garcia/super_resolution_recovery}.

\bibliography{BibliographyNotesCompMR}

\end{document}